\pgfplotsset{compat=1.15}
\newtheorem{theorem}{Theorem}[section]
\theoremstyle{definition}
\newtheorem{remark}{Remark}[section]
\newtheorem{example}{Example}
\definecolor{qqttzz}{rgb}{0,0.2,0.6}
\definecolor{qqwwtt}{rgb}{0,0.4,0.2}
\definecolor{ffzztt}{rgb}{1,0.6,0.2}
\definecolor{ccqqqq}{rgb}{0.8,0,0}
\date{}
\def\blfootnote{\xdef\@thefnmark{}\@footnotetext}
\begin{document}
	\title{On the number of small Steiner triple systems with Veblen points}
	\maketitle
	\blfootnote{\textit{\textup{2020} Mathematics Subject Classification}: 05B07, 20N05, 51E10.} 
	\noindent
	{{Giuseppe Filippone}
		\\
		\footnotesize{Dipartimento di Matematica e Informatica}\\
		\footnotesize{Università degli Studi di Palermo, Via Archirafi 34, 90123 Palermo, Italy}\\
		\footnotesize{giuseppe.filippone01@unipa.it}}

	\medskip\noindent
	{{Mario Galici}
		\\
		\footnotesize{Dipartimento di Matematica e Informatica}\\
		\footnotesize{Università degli Studi di Palermo, Via Archirafi 34, 90123 Palermo, Italy}\\
		\footnotesize{mario.galici@unipa.it}}

	\begin{abstract}
		The concept of \emph{Schreier extensions} of loops was introduced in the general case in \cite{NagyStrambach_Schreier} and, more recently, it has been explored in the context of Steiner loops in \cite{FFG}. In the latter case, it gives a powerful method for constructing Steiner triple systems containing Veblen points. Counting all Steiner triple systems of order $v$ is an open problem for $v>21$. In this paper, we investigate the number of Steiner triple systems of order $19$, $27$ and $31$ containing Veblen points and we present some examples. 

	\end{abstract}

		\section{Introduction}
		
		Classifying all Steiner triple systems of a given order $v$ ($\mathrm{STS}(v)$s for short) is a wild problem. The last full classification was done by P. Kaski and P. R. J. {\"O}sterg{\aa}rd in \cite{STS19}, in which they determined the $11,084,874,829$ non-isomorphic $\mathrm{STS}(19)$s. Attempting to classify all Steiner triple systems for the next admissible order, which is $21$, appears currently unfeasible. However, in \cite{enumeratiostswithsub} and \cite{sts21withsub}, the authors classified $\mathrm{STS}(21)$s containing subsystems of order $7$ and $9$, and also gave an estimation of the total number of all $\mathrm{STS}(21)$s. Due to this estimation, instead of achieving a complete classification, {\"O}sterg{\aa}rd and Heinlein in  \cite{STS21HeinleinOstergard} focused on the more manageable task of finding the number of the isomorphism classes of $\mathrm{STS}(21)$s, which is $14,796,207,517,873,771$.

		For this reason, instead of a complete classification, it is better to look at Steiner triple systems presenting some regular structures, for example having some given subsystems. In particular, we focus our investigation on the number of $\mathrm{STS}$s containing particular points, known as \emph{Veblen points}. A Veblen point has the property that any two distinct triples through it generate a Pasch configuration, hence an $\mathrm{STS}(7)$. In this sense, Steiner triple systems containing Veblen points are close to projective $\mathrm{STS}$s. The latter are indeed characterized by the fact every element is a Veblen point \cite[Th. 8.15]{ColbournRosa}. For this reason, we can say that Steiner triple systems with Veblen points preserve a regularity resembling the structure of a projective geometry. In \cite{FFG}, the authors provide a theoretical algebraic technique for constructing Steiner triple systems containing Veblen points by using methods from the theory of loop extensions. We will use this approach to investigate the cases of order $19$, $27$ and $31$, respectively. In \Cref{appendix}, we explain how we applied this method to achieve our results. Before going into the details, let us briefly recall some preliminary facts. 
		
		\subsection{Preliminaries}
		
		A Steiner triple system consists of a pair $(\mathcal{S},\mathcal{T})$, where $\mathcal{S}$ is a set of points and $\mathcal{T}$ is a family of triples of elements of $\mathcal{S}$, with the property that for any two different points in $\mathcal{S}$ there exists precisely one triple in $\mathcal{T}$ containing them. Generally, we will denote a Steiner triple system just by its set of points. A Steiner triple system of order $v$ exists if and only if $v\equiv 1$ or $3$ $\mathrm{mod} \ 6$, and in this case $v$ is said to be \emph{admissible}. Classical examples of Steiner triple systems include the \emph{projective} and \emph{affine} $\mathrm{STS}$s. The former are the point-line designs of an $n$-dimensional projective space $\mathrm{PG}(n,2)$ over the field $\mathrm{GF(2)}$. The latter are the point-line designs of an $n$-dimensional affine space $\mathrm{AG}(n,3)$ over the field $\mathrm{GF(3)}$. In both cases, the points of the Steiner triple system are the points of the space, and the triples are precisely the lines.
		
		In this paper we also deal with the concept of \emph{loops}, which has its own great importance in the field of abstract algebra, and also a significant relation to Steiner triple systems. A \emph{loop} is a set $L$ equipped with a binary operation $x\cdot y$ and an identity element $\Omega\in L$ such that the equations $x\cdot a=b$ and $a\cdot y=b$ have unique solutions $x,y\in L$. A loop does not need to be associative, and if this is the case, then it is a group. The reader is referred to \cite{bruckbook} or \cite{pflugfelder} for more insights  about loop theory.
		
		Any Steiner triple system $\mathcal{S}$ can be associated with a loop called \emph{Steiner loop}. This loop is defined over the set $\mathcal{L_S}:=\mathcal{S}\cup\{\Omega\}$, where $\Omega$ is an additional point, and the operation is described as follows:
		\begin{itemize}
		\item for any $x \in \mathcal{L_S}$, $x \cdot x=\Omega$ and $x \cdot \Omega=\Omega \cdot x=x$;
		\item for any distinct $x,y \in \mathcal{S}$, $x \cdot y=z$, where $z$ is the third point in the triple of  $\mathcal{S}$ containing both $x$ and $y$, or equivalently $x\cdot y \cdot z=\Omega$ for any triple $\{x,y,z\}$ of $\mathcal{S}$.
		\end{itemize}
		By definition, $\mathcal{L_S}$ is a commutative loop of exponent $2$ with identity element $\Omega$. Furthermore, $\mathcal{L_S}$ is a group, more specifically an elementary abelian $2$-group, exactly when the corresponding Steiner triple system $\mathcal{S}$ is projective.
		
		A point $x$ in an $\mathrm{STS}$ $\mathcal{S}$ is a \emph{Veblen point} if whenever $\{x,a,b\}$, $\{x,c,d\}$, $\{t,a,c\}$ are triples of $\mathcal{S}$, also $\{t,b,d\}$ is a triple of $\mathcal{S}$. In other words, two different triples through $x$ generate a Pasch configuration, as in \Cref{fig:pasch_antipasch_conf}.

		\begin{figure}[H]
		\begin{center}
			\begin{tikzpicture}[scale=0.30] 
				\draw[line width=1] (-6,0) -- (0,10);
				\draw[line width=1] (6,0) -- (0,10);
				\draw[line width=1] (-6,0) -- (3,5);
				\draw[line width=1] (6,0) -- (-3,5);

				\fill (-6,0) circle (10pt);
				\fill (6,0) circle (10pt);
				\fill (0,10) circle (10pt);
				\fill (3,5) circle (10pt);
				\fill (0,3.38) circle (10pt);
				\fill (-3,5) circle (10pt);
				
				\node at (0,11) {$x$};
				\node at (0,2) {$t$};
				\node at (-7,0) {$b$};
				\node at (7,0) {$c$};
				\node at (-4,5) {$a$};
				\node at (4,5) {$d$};
			\end{tikzpicture}
		\end{center}
		\caption{A Pasch configuration through a Veblen point $x$.}
		\label{fig:pasch_antipasch_conf}
		\end{figure}
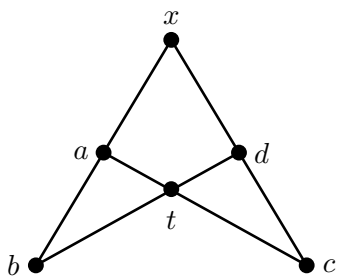
		
		As noted in \cite{FFG}, a point $x\in\mathcal{S}$ is a Veblen point if  and only if it is a non-trivial central element of $\mathcal{L_S}$. It is important to recall here that the center of a loop consists of all the elements which associate and commute with every other element within the loop. 
		
		The method presented in \cite{FFG} for constructing Steiner triple systems with Veblen points is based on the concept of the so-called \emph{Schreier extensions} of Steiner loops, introduced in a more general case in  \cite{NagyStrambach_Schreier}. Let us consider
		\begin{itemize}
			\item an elementary abelian $2$-group $(\mathcal{L_N},+)$ with identity element $\Omega$,
			
			\item a Steiner loop $(\mathcal{L_Q},\cdot)$ with identity element $\bar\Omega$,
			
			\item  a symmetric function $f\colon \mathcal{L_Q}\times\mathcal{L_Q}\to\mathcal{L_N}$  such that $f(P,\bar\Omega)=f(P,P)=\Omega$ for every $P\in\mathcal{L_Q}$ and 
			\begin{equation}\label{consttriples}
			f(P,Q)=f(P,R)=f(Q,R) \quad \text{whenever} \quad \{P,Q,R\} \text{ is a triple of } \mathcal{Q}.
			\end{equation}
		\end{itemize}
		The following operation,
		\begin{equation*}
			(P,x)\circ(Q,y)=(PQ,x+y+f(P,Q)),
		\end{equation*}
		makes the set $\mathcal{L_Q}\times\mathcal{L_N}$ a Steiner loop $\mathcal{L_S}$ with identity element $(\bar\Omega, \Omega)$. This new Steiner loop contains $\mathcal{L_N}$ as a central subloop, and the corresponding quotient loop is isomorphic to $\mathcal{L_Q}$. In other words, $\mathcal{L_S}$ is a central extension of $\mathcal{L_N}$ by $\mathcal{L_Q}$, known as \emph{Schreier extension}. The function $f$ is the \emph{factor system} of the extension. Since $\mathcal{L_N}$ is contained in the center of $\mathcal{L_S}$, every element of $\mathcal{N}$ is a Veblen point of $\mathcal{S}$. We remark here that every loop with a central subgroup can be described as a Schreier extension of this central subgroup by the corresponding quotient loop (cf. \cite[p. 334]{Bruck_contributions}). 
		
		We denote by $\mathrm{Ext_S}(\mathcal{L_N},\mathcal{L_Q})$ the  group of all the factor systems. From \Cref{consttriples}, we have that the order of $\mathrm{Ext_S}(\mathcal{L_N},\mathcal{L_Q})$  is $|\mathcal{L_N}|^{b}$,
		where $b$ is the number of triples of $\mathcal{Q}$.
		
		Two Schreier extensions of $\mathcal{L_N}$ by $\mathcal{L_Q}$, denoted as $\mathcal{L}_{\mathcal{S}_1}$ and $\mathcal{L}_{\mathcal{S}_2}$, are said \emph{equivalent} if there exists an isomorphism of loops $\phi\colon \mathcal{L}_{\mathcal{S}_1}\longrightarrow\mathcal{L}_{\mathcal{S}_2}$ which fixes point-wise $\mathcal{L_N}$ and induces the identity also on $\mathcal{L_Q}$. They are said \emph{isomorphic} if there exists an isomorphism of loops $\phi\colon \mathcal{L}_{\mathcal{S}_1}\longrightarrow\mathcal{L}_{\mathcal{S}_2}$ which fixes globally $\mathcal{L_N}$. In these cases, we also refer to the corresponding factor systems as equivalent or, respectively, isomorphic.

		As a characterization, in \cite{FFG} it is shown that two factor systems $f_1$ and $f_2$ are equivalent if, and only if, they differ by a \emph{co-boundary}, that is, $f_1=f_2+\delta^1\varphi$, for some function $\varphi\colon\mathcal{L_Q}\to\mathcal{L_N}$ with $\varphi(\bar\Omega)=\Omega$, where $(\delta^1\varphi)(P,Q)=\varphi(P)+\varphi(Q)-\varphi(PQ)$.
		Hence, if we denote by $\mathrm{B}^2(\mathcal{L_N},\mathcal{L_Q})$ the set of the co-boundaries, the number of non-equivalent Schreier extensions of $\mathcal{L_N}$ by $\mathcal{L_Q}$ is
		$$\frac{|\mathrm{Ext_S}(\mathcal{L_N},\mathcal{L_Q})|}{|\mathrm{B}^2(\mathcal{L_N},\mathcal{L_Q})|}.$$

		Naturally, equivalent Schreier extensions are  also isomorphic, but the converse is not true in general. However, reducing up to equivalence is useful in order to easily characterize isomorphism of extensions. Indeed, when $\mathcal{L_N}$ is the whole center of $\mathcal{L_S}$, the isomorphism classes of all the factor systems are exactly the orbits of the action of the group $\mathrm{Aut}(\mathcal{L_N})\times\mathrm{Aut}(\mathcal{L_Q})$ on the set of non-equivalent factor systems defined by
		\begin{equation}
			(\alpha,\beta)(f)=\alpha f \beta^{-1},
		\end{equation}
		 where $(f\beta^{-1})(P,Q):=f\left(\beta^{-1}(P),\beta^{-1}(Q)\right)$.
	
		By definition, two isomorphic Schreier extensions give in turn two isomorphic Steiner loops, and consequently, two isomorphic Steiner triple systems. However, it is important to notice that the converse is not always true. Since the isomorphism between the Steiner loops might not preserve $\mathcal{L_N}$, it is possible to have two isomorphic Steiner loops arising from non-isomorphic Schreier extensions. This situation may occur when the Steiner triple systems produced by the two Schreier extensions have additional Veblen points not contained in $\mathcal{N}$, that is, the case where $\mathcal{L_N}$ is not the whole center. In \cite{FFG}, the author provide a criterion to determine whether this is the case.
		
		\begin{remark}[see \cite{FFG}]\label{rmk:furtherveblen}
			Let the Steiner loop $\mathcal{L_S}$ be a Schreier extension of $\mathcal{L_N}$ by $\mathcal{L_Q}$ with factor system $f$.  The Steiner triple system $\mathcal{S}$ has a Veblen point $(P,x)$ not contained in $\mathcal{N}$ if and only if $P$ is itself a Veblen point of $\mathcal{Q}$ and 
			\begin{equation} \label{eq:furtherveblen}
				f(P,Q)+f(PQ,R)=f(Q,R)+f(P,QR),
			\end{equation}
			for every $Q,R\in\mathcal{Q}$. The condition \eqref{eq:furtherveblen} reflects the centrality of the element $(P,x)$ in $\mathcal{L_S}$.
		\end{remark}

		In \cite{FFG} the authors prove that there exists an $\mathrm{STS}(v)$ with (at least) $2^c-1<v$ Veblen points if and only if 
		\begin{equation}\label{eq:numvebpoints}
			\frac{v+1}{2^c}\equiv 2,4 \pmod 6.
		\end{equation}
		Thus, the first few orders $v>7$ of Steiner triple systems which can have Veblen points are $15$, $19$, $27$ and $31$.  Indeed, among the $80$ non-isomorphic Setiner triple systems of order $15$ (see \cite[Table II.1.28]{Handbook}) only two of them have Veblen points: $\mathrm{STS}(15)$ $\# 1$, which is $\mathrm{PG}(3,2)$ and, therefore, has $15$ Veblen points, and $\mathrm{STS}(15)$ $\# 2$ which has precisely one Veblen point. Here, we recall a final result that we will use for our classification.
		
		\begin{theorem}[see \cite{FFG}]\label{theoindex4}
			If a Schreier extension of Steiner loops
			\begin{equation}
				\Omega\longrightarrow \mathcal{L_N} \longrightarrow \mathcal{L_S} \longrightarrow \mathcal{L_Q} \longrightarrow \bar\Omega,
			\end{equation}
			has index at most $4$, then the resulting Steiner triple system $\mathcal{S}$ is projective.
		\end{theorem}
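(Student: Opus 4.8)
The plan is to reduce the projectivity of $\mathcal{S}$ to the associativity of the extended loop $\mathcal{L_S}$, and then to show that the factor-system axioms force associativity as soon as the index is small. Recall from the preliminaries that $\mathcal{L_S}$ is always a commutative Steiner loop of exponent $2$, and that $\mathcal{S}$ is projective exactly when $\mathcal{L_S}$ is an elementary abelian $2$-group. Since a commutative loop of exponent $2$ is such a group precisely when it is associative, it suffices to prove that the operation $\circ$ is associative.

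First I would expand both sides of the associativity identity using the extension rule $(P,x)\circ(Q,y)=(PQ,x+y+f(P,Q))$. Computing $\big((P,x)\circ(Q,y)\big)\circ(R,z)$ and $(P,x)\circ\big((Q,y)\circ(R,z)\big)$, the first two coordinates always coincide (giving $PQR$ and $x+y+z$), so associativity is equivalent to the $2$-cocycle identity
\[
f(P,Q)+f(PQ,R)=f(Q,R)+f(P,QR)\qquad\text{for all }P,Q,R\in\mathcal{L_Q},
\]
which is exactly relation \eqref{eq:furtherveblen}.

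Next I would unwind what ``index at most $4$'' means: it is the condition $|\mathcal{L_Q}|\le 4$. As a Steiner loop has even order, the only possibilities are $|\mathcal{L_Q}|=2$ and $|\mathcal{L_Q}|=4$, in which cases $\mathcal{Q}$ has respectively no triple and a single triple. If $|\mathcal{L_Q}|=2$, the normalization $f(P,\bar\Omega)=f(P,P)=\Omega$ forces $f\equiv\Omega$ and the cocycle identity is trivial. If $|\mathcal{L_Q}|=4$, write $\mathcal{L_Q}=\{\bar\Omega,A,B,C\}$ with unique triple $\{A,B,C\}$; then \eqref{consttriples} forces $f(A,B)=f(A,C)=f(B,C)=:g$, so the entire factor system is governed by the single value $g\in\mathcal{L_N}$.

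Finally I would verify the cocycle identity for $|\mathcal{L_Q}|=4$ by checking the finitely many triples $(P,Q,R)$ with entries in $\{\bar\Omega,A,B,C\}$. Those involving $\bar\Omega$ or a repeated argument collapse by means of $f(X,\bar\Omega)=\Omega$, $f(X,X)=\Omega$, and the exponent-$2$ relation $g+g=\Omega$ in $\mathcal{L_N}$; the only genuinely three-element case $\{P,Q,R\}=\{A,B,C\}$ yields $g$ on both sides. Hence $\circ$ is associative, $\mathcal{L_S}$ is an elementary abelian $2$-group, and $\mathcal{S}$ is projective. I do not expect a real obstacle: the crux is the identification of associativity with the cocycle condition, after which the scarcity of free parameters at index $2$ and $4$ makes the condition automatic.
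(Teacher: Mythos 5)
The paper itself gives no proof of this statement --- it is imported from \cite{FFG} with a pointer --- so there is no internal argument to compare against; judged on its own merits, your proof is correct, and it is the natural argument, but one step you state as automatic is not. When you expand $\bigl((P,x)\circ(Q,y)\bigr)\circ(R,z)$ and $(P,x)\circ\bigl((Q,y)\circ(R,z)\bigr)$, the $\mathcal{L_Q}$-coordinates are $(PQ)R$ and $P(QR)$, and these ``always coincide'' only if $\mathcal{L_Q}$ is itself associative --- false for a general quotient Steiner loop, in which case $\circ$ cannot be associative no matter what $f$ is. So ``associativity of $\circ$ $\Leftrightarrow$ the cocycle identity \eqref{eq:furtherveblen}'' needs the prior observation that index at most $4$ forces $|\mathcal{L_Q}|\in\{1,2,4\}$, and that the Steiner loops of these orders are groups: the trivial group, $\mathbb{Z}/2\mathbb{Z}$, and (for the single triple $\{A,B,C\}$) the Klein four-group. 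This is where the smallness hypothesis enters a second time, independently of the scarcity of free parameters in $f$, and it should be said explicitly; with that sentence added the proof closes.

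The rest checks out. For $|\mathcal{L_Q}|=2$ the normalization $f(P,\bar\Omega)=f(P,P)=\Omega$ does force $f\equiv\Omega$. For $|\mathcal{L_Q}|=4$, relation \eqref{consttriples} reduces $f$ to the single value $g=f(A,B)=f(A,C)=f(B,C)$, and the case analysis is sound: triples involving $\bar\Omega$ collapse by normalization; triples with a repeated entry reduce to $g+g=\Omega$, which holds since $\mathcal{L_N}$ is an elementary abelian $2$-group (e.g.\ for $(A,A,B)$ one side is $f(A,A)+f(\bar\Omega,B)=\Omega$ and the other is $f(A,B)+f(A,AB)=g+g$); and the case $\{P,Q,R\}=\{A,B,C\}$ gives $g+f(C,C)=g$ on both sides. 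Together with commutativity (symmetry of $f$) and exponent $2$ (from $f(P,P)=\Omega$), associativity makes $\mathcal{L_S}$ an elementary abelian $2$-group, hence $\mathcal{S}$ projective by the characterization recalled in the preliminaries. Two cosmetic omissions: the trivial index-$1$ case, and the exponent-$2$ verification just mentioned; neither affects the argument.
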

		
		This means that a non-projective Steiner triple system of order $v$ can have at most $\frac{v-7}{8}$ Veblen points. 
		
		In this paper we are going to study the cases of $\mathrm{STS}$s of order $19$, $27$ and $31$. In \Cref{appendix}, we give some details of the implementation used to obtain our results.

		\section{Steiner triple systems of order $19$}
		
		By \Cref{eq:numvebpoints}, the number of Veblen points of a Steiner triple system of order $19$ can be at most $1$.

		\begin{theorem}
			Among the $11,084,874,829$ non-isomorphic $\mathrm{STS}(19)$s, there are only $3$ Steiner triple systems with (exactly) one Veblen point.
		\end{theorem}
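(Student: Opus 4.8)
The plan is to realize every $\mathrm{STS}(19)$ carrying a Veblen point as a Schreier extension and then enumerate the resulting isomorphism classes directly, rather than sifting through all $11{,}084{,}874{,}829$ systems. First I would record that, by \Cref{eq:numvebpoints}, an $\mathrm{STS}(19)$ admits at most one Veblen point, so that \emph{at least one} and \emph{exactly one} coincide. Whenever a Veblen point is present, the central subloop $\mathcal{L_N}$ it generates has order $2$ and, since the order forces the center to have order at most $2$, $\mathcal{L_N}$ is in fact the \emph{whole} center of the associated Steiner loop $\mathcal{L_S}$ (of order $20$). Thus each such system is a Schreier extension $\Omega\to\mathcal{L_N}\to\mathcal{L_S}\to\mathcal{L_Q}\to\bar\Omega$ with $\mathcal{L_N}\cong\mathbb{Z}_2$ and $\mathcal{L_Q}$ a Steiner loop of order $10$, i.e. the Steiner loop of an $\mathrm{STS}(9)$. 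As the $\mathrm{STS}(9)$ is unique (it is $\mathrm{AG}(2,3)$), the loop $\mathcal{L_Q}$ is fixed up to isomorphism, and the whole problem reduces to classifying the Schreier extensions of $\mathbb{Z}_2$ by this single $\mathcal{L_Q}$.

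Next I would count factor systems up to equivalence. Since $\mathrm{AG}(2,3)$ has $b=12$ triples, $|\mathrm{Ext_S}(\mathcal{L_N},\mathcal{L_Q})|=2^{12}$. For the coboundaries, the map $\varphi\mapsto\delta^1\varphi$ is $\mathbb{F}_2$-linear on the $2^{9}$ functions $\varphi\colon\mathcal{L_Q}\to\mathbb{Z}_2$ with $\varphi(\bar\Omega)=\Omega$, and its kernel is $\mathrm{Hom}(\mathcal{L_Q},\mathbb{Z}_2)$, i.e. the $\mathbb{F}_2$-valued functions on the nine points that sum to $0$ on every line. A short linear-algebra check against the $12$ line equations shows this kernel is trivial, whence $|\mathrm{B}^2(\mathcal{L_N},\mathcal{L_Q})|=2^{9}$ and the number of non-equivalent extensions is $2^{12}/2^{9}=8$; these form a group isomorphic to $(\mathbb{Z}_2)^3$.

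Finally I would pass from equivalence to isomorphism. Because $\mathcal{L_N}$ is the full center, any loop isomorphism of $\mathcal{L_S}$ must preserve it, so on the one hand non-isomorphic extensions yield non-isomorphic Steiner triple systems, and on the other hand the isomorphism classes of extensions are exactly the orbits of $\mathrm{Aut}(\mathcal{L_N})\times\mathrm{Aut}(\mathcal{L_Q})$ acting on the $8$ equivalence classes. Here $\mathrm{Aut}(\mathcal{L_N})$ is trivial and $\mathrm{Aut}(\mathcal{L_Q})\cong\mathrm{AGL}(2,3)$ has order $432$. Computing the orbits of this group on $(\mathbb{Z}_2)^3$ — the trivial factor system being a fixed singleton, while the seven non-trivial classes fall into two orbits — yields $3$ orbits in total, hence exactly $3$ non-isomorphic $\mathrm{STS}(19)$ with a Veblen point.

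The main obstacle is the last orbit computation: one must identify the image of $\mathrm{AGL}(2,3)$ inside $\mathrm{GL}(3,2)$ acting on the $8$ classes and check that it produces precisely two orbits on the seven non-trivial ones — and not a single orbit, which would give the overly symmetric count $2$. This is a finite verification, most safely carried out by the computational procedure described in \Cref{appendix}. By contrast, the preceding reductions (the order-$2$ center forced by the admissibility bound, the uniqueness of the $\mathrm{STS}(9)$ quotient, and the triviality of the coboundary kernel) are the conceptual steps that turn the count into a tractable search.
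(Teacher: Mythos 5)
Your proposal is correct and follows essentially the same route as the paper: realize the Steiner loop as a Schreier extension of the order-$2$ center by the Steiner loop of the unique $\mathrm{STS}(9)$, count $2^{12}$ factor systems against $2^{9}$ coboundaries to get $8$ equivalence classes, and let $\mathrm{Aut}(\mathcal{L_Q})\cong\mathrm{AGL}(2,3)$ of order $432$ act to obtain $3$ orbits, which are isomorphism classes precisely because $\mathcal{L_N}$ is the full center. Your added justifications (triviality of the kernel of $\delta^1$ via the line equations of $\mathrm{AG}(2,3)$, and the linear action on $(\mathbb{Z}_2)^3$ fixing the trivial class) are sound refinements of steps the paper delegates to its computational appendix.
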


		\begin{proof}
			If an $\mathrm{STS}(19)$ $\mathcal{S}$ has one Veblen point, then this point is the only non-trivial central element of the Steiner loop $\mathcal{L_S}$. Hence, we can obtain $\mathcal{L_S}$ as a Schreier extension of its center $\mathcal{L_N}$, which is the group of order $2$, by the unique Steiner loop $\mathcal{L_Q}$ of order $10$ corresponding to the $\mathrm{STS}(9)$. Since the order of $\mathcal{L_N}$ is $2$ and $\mathcal{Q}$ has $12$ triples, the total number of possible factor systems in $\mathrm{Ext}_S(\mathcal{L_N},\mathcal{L_Q})$ is $2^{12}=4096$. 
			
			Now, we apply the algorithms described in \Cref{sec:descriptionalgorithms}.
			The set of co-boundaries $\mathrm{B}^2$ has order $2^9=512$. Consequently, the number of non-equivalent extensions is $8$. We note now that, since $\mathrm{Aut}(\mathcal{L_N})$ is trivial, the action of the group $\mathrm{Aut}(\mathcal{L_N})\times\mathrm{Aut}(\mathcal{L_Q})$ can be reduced to the action of just $\mathrm{Aut}(\mathcal{L_Q})=\mathrm{Aff}(2,3)$, which has order $432$. The $8$ equivalence classes of factor systems are divided under this action into $3$ orbits, each of which represents an isomorphism class of extensions of $\mathcal{L_N}$ by $\mathcal{L_Q}$. Since $\mathcal{L_N}$ is the center of $\mathcal{L_S}$, these orbits correspond to the isomorphism classes of $\mathrm{STS}(19)$s with precisely one Veblen point. 
		\end{proof}

		In order to provide a full description of the three Steiner triple systems of order $19$ containing one Veblen point, let represent the $\mathrm{STS}(9)$ as the affine plane $\mathrm{AG}(2,3)$, with points and lines as in \Cref{fig:sts9chap3}.
		
		\begin{figure}[H]
			\begin{center}
				\begin{tikzpicture}[line cap=round,line join=round,>=triangle 45,x=1cm,y=1cm,scale=1.4]
					\draw [line width=1pt,color=gray] (-1,-1)-- (1,1);
					\draw [line width=1pt,color=gray] (-1,-1)-- (1,-1);
					\draw [line width=1.5pt,color=gray] (1,-1)-- (1,1);
					\draw [line width=1pt,color=gray] (1,1)-- (-1,1);
					\draw [line width=1pt,color=gray] (-1,1)-- (-1,-1);
					\draw [line width=1pt,color=gray] (-1,0)-- (1,0);
					\draw [line width=1pt,color=gray] (0,1)-- (0,-1);
					\draw [line width=1pt,color=gray] (1,-1)-- (-1,1);
					\draw [line width=1pt,color=gray] (0,-1)-- (1,0);
					\draw [line width=1pt,color=gray] (1,0)-- (0,1);
					\draw [line width=1pt,color=gray] (0,1)-- (-1,0);
					\draw [line width=1pt,color=gray] (-1,0)-- (0,-1);
					\draw [shift={(-1.699214041911079,-0.8152542693528347)},line width=1pt,color=gray]  plot[domain=2.3117548637419008:4.2939481603980205,variable=\t]({-0.8196591288696354*3.96958988026553*cos(\t r)+0.5728515623271623*1.1900139979490725*sin(\t r)},{-0.5728515623271623*3.96958988026553*cos(\t r)+-0.8196591288696354*1.1900139979490725*sin(\t r)});
					\draw [shift={(1.699214041911079,-0.8152542693528345)},line width=1pt,color=gray]  plot[domain=2.3117548637419008:4.293948160398021,variable=\t]({0.8196591288696355*3.96958988026553*cos(\t r)+-0.5728515623271622*1.1900139979490725*sin(\t r)},{-0.5728515623271622*3.96958988026553*cos(\t r)+-0.8196591288696355*1.1900139979490725*sin(\t r)});
					\draw [shift={(-1.699214041911079,0.8152542693528347)},line width=1pt,color=gray]  plot[domain=2.3117548637419008:4.2939481603980205,variable=\t]({-0.8196591288696354*3.96958988026553*cos(\t r)+0.5728515623271623*1.1900139979490725*sin(\t r)},{0.5728515623271623*3.96958988026553*cos(\t r)+0.8196591288696354*1.1900139979490725*sin(\t r)});
					\draw [shift={(1.699214041911079,0.815254269352835)},line width=1pt,color=gray]  plot[domain=2.3117548637419008:4.2939481603980205,variable=\t]({0.8196591288696353*3.96958988026553*cos(\t r)+-0.5728515623271624*1.1900139979490725*sin(\t r)},{0.5728515623271624*3.96958988026553*cos(\t r)+0.8196591288696353*1.1900139979490725*sin(\t r)});
						\draw [fill=black] (-1,1) circle (2pt);
						\draw [fill=black] (0,1) circle (2pt);
						\draw [fill=black] (1,1) circle (2pt);
						\draw [fill=black] (-1,0) circle (2pt);
						\draw [fill=black] (0,0) circle (2pt);
						\draw [fill=black] (1,0) circle (2pt);
						\draw [fill=black] (-1,-1) circle (2pt);
						\draw [fill=black] (0,-1) circle (2pt);
						\draw [fill=black] (1,-1) circle (2pt);

						\node at (-1.20,1.2) {$P_1$};
						\node at (0,1.2) {$P_2$};
						\node at (1.20,1.2) {$P_3$};
						\node at (-1.25,0) {$P_4$};
						\node at (0.35,0.12) {$P_5$};
						\node at (1.25,0) {$P_6$};
						\node at (-1.20,-1.25) {$P_7$};
						\node at (0,-1.25) {$P_8$};
						\node at (1.20,-1.25) {$P_9$};
				\end{tikzpicture}\caption{The $\mathrm{STS}(9)$ as the affine plane $\mathrm{AG}(2,3)$.} \label{fig:sts9chap3}
			\end{center}
		\end{figure}
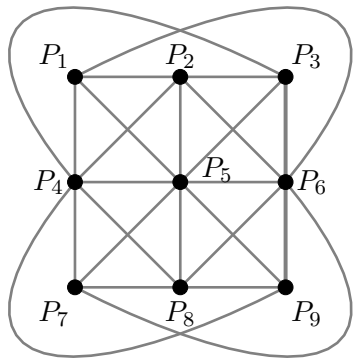

		Let us denote the $3$ non-isomorphic $\mathrm{STS}(19)$s containing one Veblen point with $\mathcal{S}_i$, $i=0,1,2$. The corresponding Steiner loops $\mathcal{L}_{\mathcal{S}_i}$ are given as Schreier extensions of $\mathcal{L_N}=\{\Omega, 1\}$ by $\mathcal{L_Q}$, with the following factor systems, respectively:
		\begin{align*}
		 &f_0, \quad \text{the null function, i.e., the function mapping every pair into $\Omega$;}\\
		 &f_1, \quad \text{defined by } f_1(P_3,P_6)=f_1(P_3,P_9)=f_1(P_6,P_9)=1, \quad \text{and $\Omega$ elsewhere;}\\
		 &f_2, \quad \text{defined by } f_2(P_3,P_6)=f_2(P_3,P_9)=f_2(P_6,P_9)=1, \ f_2(P_7,P_8)=f_2(P_7,P_9)=f_2(P_8,P_9)=1, \\ &\quad \quad \ \text{and $\Omega$ elsewhere.}
		\end{align*}
		
		 \Cref{fig:f1f2} provides a visual representation of the non-trivial factor systems $f_1$ and $f_2$, where the triples of the $\mathrm{STS}(9)$ $\mathcal{Q}$ in which $f_i$ is non-zero, $i=1,2$, are drawn in red.
		
		\begin{figure}[H]
			\begin{center}
				\begin{tikzpicture}[line cap=round,line join=round,>=triangle 45,x=1cm,y=1cm]
					\draw [line width=0.5pt,color=gray] (-1,-1)-- (1,1);
					\draw [line width=0.5pt,color=gray] (-1,-1)-- (1,-1);
					\draw [line width=1.5pt,color=red] (1,-1)-- (1,1);
					\draw [line width=0.5pt,color=gray] (1,1)-- (-1,1);
					\draw [line width=0.5pt,color=gray] (-1,1)-- (-1,-1);
					\draw [line width=0.5pt,color=gray] (-1,0)-- (1,0);
					\draw [line width=0.5pt,color=gray] (0,1)-- (0,-1);
					\draw [line width=0.5pt,color=gray] (1,-1)-- (-1,1);
					\draw [line width=0.5pt,color=gray] (0,-1)-- (1,0);
					\draw [line width=0.5pt,color=gray] (1,0)-- (0,1);
					\draw [line width=0.5pt,color=gray] (0,1)-- (-1,0);
					\draw [line width=0.5pt,color=gray] (-1,0)-- (0,-1);
					\draw [shift={(-1.699214041911079,-0.8152542693528347)},line width=0.5pt,color=gray]  plot[domain=2.3117548637419008:4.2939481603980205,variable=\t]({-0.8196591288696354*3.96958988026553*cos(\t r)+0.5728515623271623*1.1900139979490725*sin(\t r)},{-0.5728515623271623*3.96958988026553*cos(\t r)+-0.8196591288696354*1.1900139979490725*sin(\t r)});
					\draw [shift={(1.699214041911079,-0.8152542693528345)},line width=0.5pt,color=gray]  plot[domain=2.3117548637419008:4.293948160398021,variable=\t]({0.8196591288696355*3.96958988026553*cos(\t r)+-0.5728515623271622*1.1900139979490725*sin(\t r)},{-0.5728515623271622*3.96958988026553*cos(\t r)+-0.8196591288696355*1.1900139979490725*sin(\t r)});
					\draw [shift={(-1.699214041911079,0.8152542693528347)},line width=0.5pt,color=gray]  plot[domain=2.3117548637419008:4.2939481603980205,variable=\t]({-0.8196591288696354*3.96958988026553*cos(\t r)+0.5728515623271623*1.1900139979490725*sin(\t r)},{0.5728515623271623*3.96958988026553*cos(\t r)+0.8196591288696354*1.1900139979490725*sin(\t r)});
					\draw [shift={(1.699214041911079,0.815254269352835)},line width=0.5pt,color=gray]  plot[domain=2.3117548637419008:4.2939481603980205,variable=\t]({0.8196591288696353*3.96958988026553*cos(\t r)+-0.5728515623271624*1.1900139979490725*sin(\t r)},{0.5728515623271624*3.96958988026553*cos(\t r)+0.8196591288696353*1.1900139979490725*sin(\t r)});
					\begin{scriptsize}
						\draw [fill=black] (-1,1) circle (2pt);
						\draw [fill=black] (0,1) circle (2pt);
						\draw [fill=black] (1,1) circle (2pt);
						\draw [fill=black] (-1,0) circle (2pt);
						\draw [fill=black] (0,0) circle (2pt);
						\draw [fill=black] (1,0) circle (2pt);
						\draw [fill=black] (-1,-1) circle (2pt);
						\draw [fill=black] (0,-1) circle (2pt);
						\draw [fill=black] (1,-1) circle (2pt);

						\node at (-1.25,1.2) {$P_1$};
						\node at (-1.25,0) {$P_4$};
						\node at (-1.25,-1.2) {$P_7$};
						\node at (0,1.2) {$P_2$};
						\node at (0,-1.2) {$P_8$};
						\node at (1.25,1.2) {$P_3$};
						\node at (1.25,0) {$P_6$};
						\node at (1.25,-1.2) {$P_9$};
						\node at (0.15,0.20) {$P_5$};
					\end{scriptsize}
				\end{tikzpicture}
				\hspace{2cm}
				\begin{tikzpicture}[line cap=round,line join=round,>=triangle 45,x=1cm,y=1cm]
					\draw [line width=0.5pt,color=gray] (-1,-1)-- (1,1);
					\draw [line width=1.5pt,color=red] (-1,-1)-- (1,-1);
					\draw [line width=1.5pt,color=red] (1,-1)-- (1,1);
					\draw [line width=0.5pt,color=gray] (1,1)-- (-1,1);
					\draw [line width=0.5pt,color=gray] (-1,1)-- (-1,-1);
					\draw [line width=0.5pt,color=gray] (-1,0)-- (1,0);
					\draw [line width=0.5pt,color=gray] (0,1)-- (0,-1);
					\draw [line width=0.5pt,color=gray] (1,-1)-- (-1,1);
					\draw [line width=0.5pt,color=gray] (0,-1)-- (1,0);
					\draw [line width=0.5pt,color=gray] (1,0)-- (0,1);
					\draw [line width=0.5pt,color=gray] (0,1)-- (-1,0);
					\draw [line width=0.5pt,color=gray] (-1,0)-- (0,-1);
					\draw [shift={(-1.699214041911079,-0.8152542693528347)},line width=0.5pt,color=gray]  plot[domain=2.3117548637419008:4.2939481603980205,variable=\t]({-0.8196591288696354*3.96958988026553*cos(\t r)+0.5728515623271623*1.1900139979490725*sin(\t r)},{-0.5728515623271623*3.96958988026553*cos(\t r)+-0.8196591288696354*1.1900139979490725*sin(\t r)});
					\draw [shift={(1.699214041911079,-0.8152542693528345)},line width=0.5pt,color=gray]  plot[domain=2.3117548637419008:4.293948160398021,variable=\t]({0.8196591288696355*3.96958988026553*cos(\t r)+-0.5728515623271622*1.1900139979490725*sin(\t r)},{-0.5728515623271622*3.96958988026553*cos(\t r)+-0.8196591288696355*1.1900139979490725*sin(\t r)});
					\draw [shift={(-1.699214041911079,0.8152542693528347)},line width=0.5pt,color=gray]  plot[domain=2.3117548637419008:4.2939481603980205,variable=\t]({-0.8196591288696354*3.96958988026553*cos(\t r)+0.5728515623271623*1.1900139979490725*sin(\t r)},{0.5728515623271623*3.96958988026553*cos(\t r)+0.8196591288696354*1.1900139979490725*sin(\t r)});
					\draw [shift={(1.699214041911079,0.815254269352835)},line width=0.5pt,color=gray]  plot[domain=2.3117548637419008:4.2939481603980205,variable=\t]({0.8196591288696353*3.96958988026553*cos(\t r)+-0.5728515623271624*1.1900139979490725*sin(\t r)},{0.5728515623271624*3.96958988026553*cos(\t r)+0.8196591288696353*1.1900139979490725*sin(\t r)});
					\begin{scriptsize}
						\draw [fill=black] (-1,1) circle (2pt);
						\draw [fill=black] (0,1) circle (2pt);
						\draw [fill=black] (1,1) circle (2pt);
						\draw [fill=black] (-1,0) circle (2pt);
						\draw [fill=black] (0,0) circle (2pt);
						\draw [fill=black] (1,0) circle (2pt);
						\draw [fill=black] (-1,-1) circle (2pt);
						\draw [fill=black] (0,-1) circle (2pt);
						\draw [fill=black] (1,-1) circle (2pt);

						\node at (-1.25,1.2) {$P_1$};
						\node at (-1.25,0) {$P_4$};
						\node at (-1.25,-1.2) {$P_7$};
						\node at (0,1.2) {$P_2$};
						\node at (0,-1.2) {$P_8$};
						\node at (1.25,1.2) {$P_3$};
						\node at (1.25,0) {$P_6$};
						\node at (1.25,-1.2) {$P_9$};
						\node at (0.15,0.20) {$P_5$};
					\end{scriptsize}
				\end{tikzpicture}\caption{The factor systems $f_1$ (left) and $f_2$ (right).}\label{fig:f1f2}
			\end{center}
		\end{figure}
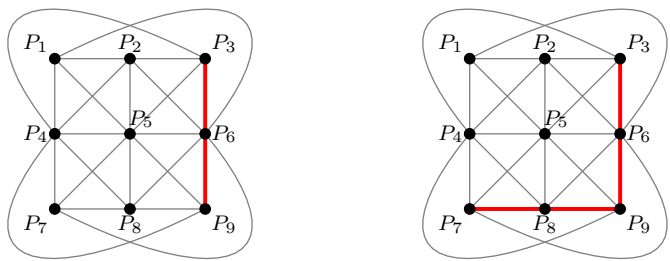
		
		Now we describe explicitly the points and triples of each of these Steiner triple systems $\mathcal{S}_i$, $i=1,2,3$.	Since the factor system $f_0$ is the null function, in the loop $\mathcal{L}_{\mathcal{S}_0}$ the operation is simply described by
		\begin{equation*}
			(P_i,x)\circ(P_j,y)=(P_iP_j,x+y).
		\end{equation*}
		Hence the triples are given by the families
		\begin{align*}
			&\bigl\{  (P_i,\Omega), \ (P_j,\Omega), \ (P_iP_j,\Omega) \mid P_i,P_j\in\mathcal{Q}, \ i\neq j \bigr\}, \\
			&\bigl\{  (P_i,1), \ (P_j,1), \ (P_iP_j,\Omega) \mid P_i,P_j\in\mathcal{Q}, \ i\neq j   \bigr\}, \\
			& \bigl\{  (P_i,1), \ (P_i,\Omega), \ (\Omega,1) \mid P_i\in\mathcal{Q}\bigr\}.
		\end{align*}
		We note that the first family has $12$ triples, the second one has $36$ and the third one has $9$, for a total of $57$, which is the exact number of triples in an $\mathrm{STS}(19)$.
		Renaming the elements as follows,
		\begin{align*}
			0&:=(\bar \Omega, 1) && \text{(the Veblen point),}\\
			i&:=(P_i,\Omega), && i=1,\dots,9	\\
			i+9&:=(P_i,1), && i=1,\dots,9
		\end{align*}
		we obtain the representation of the $\mathrm{STS}(19)$ $\mathcal{S}_0$ as in \Cref{tab:sts19veblen0}, where the columns are the triples of $\mathcal{S}_0$.

	\begin{table}[H]
		\centering
		\begin{tabular}{*{19}{c}}
			\hline
			$0 \!$ & $\! 0 \!$&$\! 0 \!$&$\! 0 \!$&$\! 0 \!$&$\! 0 \!$&$\! 0 \!$&$\! 0 \!$&$\! 0 \!$&$\! 1 \!$&$\! 1 \!$&$\! 1 \!$&$\! 1 \!$&$\! 1 \!$&$\! 1 \!$&$\! 1 \!$&$\! 1 \!$&$\! 2 \!$&$\! 2$ \\
			$1 \!$&$\! 2 \!$&$\! 3 \!$&$\! 4 \!$&$\! 5 \!$&$\! 6 \!$&$\! 7 \!$&$\! 8 \!$&$\! 9 \!$&$\! 2 \!$&$\! 4 \!$&$\! 5 \!$&$\! 6 \!$&$\! 11 \!$&$\! 13 \!$&$\! 14 \!$&$\! 15 \!$&$\! 4 \!$&$\! 5$ \\
			$10 \!$&$\! 11 \!$&$\! 12 \!$&$\! 13 \!$&$\! 14 \!$&$\! 15 \!$&$\! 16 \!$&$\! 17 \!$&$\! 18 \!$&$\! 3 \!$&$\! 7 \!$&$\! 9 \!$&$\! 8 \!$&$\! 12 \!$&$\! 16 \!$&$\! 18 \!$&$\! 17 \!$&$\! 9 \!$&$\! 8$ \\
			\hline
			\\ 
			\hline
			$2 \!$&$\! 2 \!$&$\! 2 \!$&$\! 2 \!$&$\! 2 \!$&$\! 3 \!$&$\! 3 \!$&$\! 3 \!$&$\! 3 \!$&$\! 3 \!$&$\! 3 \!$&$\! 3 \!$&$\! 4 \!$&$\! 4 \!$&$\! 4 \!$&$\! 4 \!$&$\! 4 \!$&$\! 5 \!$&$\! 5$ \\
			$6 \!$&$\! 10 \!$&$\! 13 \!$&$\! 14 \!$&$\! 15 \!$&$\! 4 \!$&$\! 5 \!$&$\! 6 \!$&$\! 10 \!$&$\! 13 \!$&$\! 14 \!$&$\! 15 \!$&$\! 5 \!$&$\! 10 \!$&$\! 11 \!$&$\! 12 \!$&$\! 14 \!$&$\! 10 \!$&$\! 11$ \\
			$7 \!$&$\! 12 \!$&$\! 18 \!$&$\! 17 \!$&$\! 16 \!$&$\! 8 \!$&$\! 7 \!$&$\! 9 \!$&$\! 11 \!$&$\! 17 \!$&$\! 16 \!$&$\! 18 \!$&$\! 6 \!$&$\! 16 \!$&$\! 18 \!$&$\! 17 \!$&$\! 15 \!$&$\! 18 \!$&$\! 17$ \\
			\hline
			\\ 
			\hline
			$5 \!$&$\! 5 \!$&$\! 6 \!$&$\! 6 \!$&$\! 6 \!$&$\! 6 \!$&$\! 7 \!$&$\! 7 \!$&$\! 7 \!$&$\! 7 \!$&$\! 7 \!$&$\! 8 \!$&$\! 8 \!$&$\! 8 \!$&$\! 8 \!$&$\! 9 \!$&$\! 9 \!$&$\! 9 \!$&$\! 9$ \\
			$12 \!$&$\! 13 \!$&$\! 10 \!$&$\! 11 \!$&$\! 12 \!$&$\! 13 \!$&$\! 8 \!$&$\! 10 \!$&$\! 11 \!$&$\! 12 \!$&$\! 17 \!$&$\! 10 \!$&$\! 11 \!$&$\! 12 \!$&$\! 16 \!$&$\! 10 \!$&$\! 11 \!$&$\! 12 \!$&$\! 16$ \\
			$16 \!$&$\! 15 \!$&$\! 17 \!$&$\! 16 \!$&$\! 18 \!$&$\! 14 \!$&$\! 9 \!$&$\! 13 \!$&$\! 15 \!$&$\! 14 \!$&$\! 18 \!$&$\! 15 \!$&$\! 14 \!$&$\! 13 \!$&$\! 18 \!$&$\! 14 \!$&$\! 13 \!$&$\! 15 \!$&$\! 17$ \\
			\hline
		\end{tabular}
		\caption{The $\mathrm{STS}(19)$ $\mathcal{S}_0$ with unique Veblen point 0.}\label{tab:sts19veblen0}
	\end{table}

	Since the factor system $f_1$ is zero everywhere except for any pair of points in the triple $\{P_3,P_6,P_9\}$ of the $\mathrm{STS}(9)$ $\mathcal{Q}$, the operation in $\mathcal{L}_{\mathcal{S}_1}$ is the same as in $\mathcal{L}_{\mathcal{S}_0}$ except for the following cases:
	\begin{align*}
		&(P_3, \Omega)\circ (P_6,\Omega)\circ(P_9,1)=(\bar\Omega,\Omega),\\
		&(P_3, \Omega)\circ (P_6,1)\circ(P_9,\Omega)=(\bar\Omega,\Omega),\\
		&(P_3, 1)\circ (P_6,\Omega)\circ(P_9,\Omega)=(\bar\Omega,\Omega),\\
		&(P_3, 1)\circ (P_6,1)\circ(P_9,1)=(\bar\Omega,\Omega).
	\end{align*}
	Hence the triples of $\mathcal{S}_1$ are obtained by those of $\mathcal{S}_0$ by performing the \emph{Pasch switch} (see, e.g., \cite{propertiesSTS19}) corresponding to substituting $\{3,6,9\}$, $\{3,15,18\}$, $\{6,12,18\}$ and $\{9,12,15\}$ with $\{3,6,18\}$, $\{3,15,9\}$, $\{6,9,12\}$ and $\{12,15,18\}$, as in \Cref{fig:paschswitch}. 
	
	\begin{figure}[H]
		\begin{center}
			\begin{tikzpicture}[scale=0.30] 
				\draw[line width=1] (-6,0) -- (0,10);
				\draw[line width=1] (6,0) -- (0,10);
				\draw[line width=1] (-6,0) -- (3,5);
				\draw[line width=1] (6,0) -- (-3,5);

				\fill[green] (-6,0) circle (10pt);
				\fill[blue] (6,0) circle (10pt);
				\fill (0,10) circle (10pt);
				\fill (3,5) circle (10pt);
				\fill (0,3.38) circle (10pt);
				\fill (-3,5) circle (10pt);
				
				\node at (0,11) {$3$};
				\node at (0,2) {$12$};
				\node at (-7,0) {$9$};
				\node at (7.25,0) {$18$};
				\node at (-4,5) {$6$};
				\node at (4.25,5) {$15$};
			\end{tikzpicture}
			\begin{tikzpicture}[scale=0.30] 
				\draw[->,decorate,decoration={snake,amplitude=.4mm,segment length=5mm,post length=1mm}] (-2,6) -- (2,6);
				\fill[white] (0,10) circle (10pt);
				\fill[white] circle (10pt);
			\end{tikzpicture}
			\begin{tikzpicture}[scale=0.30] 
				\draw[line width=1] (-6,0) -- (0,10);
				\draw[line width=1] (6,0) -- (0,10);
				\draw[line width=1] (-6,0) -- (3,5);
				\draw[line width=1] (6,0) -- (-3,5);

				\fill[blue] (-6,0) circle (10pt);
				\fill[green] (6,0) circle (10pt);
				\fill (0,10) circle (10pt);
				\fill (3,5) circle (10pt);
				\fill (0,3.38) circle (10pt);
				\fill (-3,5) circle (10pt);
				
				\node at (0,11) {$3$};
				\node at (0,2) {$12$};
				\node at (-7.25,0) {$18$};
				\node at (7,0) {$9$};
				\node at (-4,5) {$6$};
				\node at (4.25,5) {$15$};
			\end{tikzpicture}\caption{The Pasch switch transforming the triples of $\mathcal{S}_0$ into the triples of $\mathcal{S}_1$.}\label{fig:paschswitch}
		\end{center}
	\end{figure}

	Therefore, we find the representation of the $\mathrm{STS}(19)$ $\mathcal{S}_1$ shown in \Cref{tab:sts19veblen1}, where the columns are the triples of $\mathcal{S}_1$.
	
\begin{table}[H]
	\centering
	\begin{tabular}{*{19}{c}}
		\hline
		$\! 0 \!$ & $\! 0 \!$ & $\! 0 \!$ & $\! 0 \!$ & $\! 0 \!$ & $\! 0 \!$ & $\! 0 \!$ & $\! 0 \!$ & $\! 0 \!$ & $\! 1 \!$ & $\! 1 \!$ & $\! 1 \!$ & $\! 1 \!$ & $\! 1 \!$ & $\! 1 \!$ & $\! 1 \!$ & $\! 1 \!$ & $\! 2 \!$ & $\! 2 \!$ \\
		$\! 1 \!$ & $\! 2 \!$ & $\! 3 \!$ & $\! 4 \!$ & $\! 5 \!$ & $\! 6 \!$ & $\! 7 \!$ & $\! 8 \!$ & $\! 9 \!$ & $\! 2 \!$ & $\! 4 \!$ & $\! 5 \!$ & $\! 6 \!$ & $\! 11 \!$ & $\! 13 \!$ & $\! 14 \!$ & $\! 15 \!$ & $\! 4 \!$ & $\! 5 \!$ \\
		$\! 10 \!$ & $\! 11 \!$ & $\! 12 \!$ & $\! 13 \!$ & $\! 14 \!$ & $\! 15 \!$ & $\! 16 \!$ & $\! 17 \!$ & $\! 18 \!$ & $\! 3 \!$ & $\! 7 \!$ & $\! 9 \!$ & $\! 8 \!$ & $\! 12 \!$ & $\! 16 \!$ & $\! 18 \!$ & $\! 17 \!$ & $\! 9 \!$ & $\! 8 \!$ \\
		\hline
		\\ 
		\hline
		$\! 2 \!$ & $\! 2 \!$ & $\! 2 \!$ & $\! 2 \!$ & $\! 2 \!$ & $\! 3 \!$ & $\! 3 \!$ & $\! 3 \!$ & $\! 3 \!$ & $\! 3 \!$ & $\! 3 \!$ & $\! 3 \!$ & $\! 4 \!$ & $\! 4 \!$ & $\! 4 \!$ & $\! 4 \!$ & $\! 4 \!$ & $\! 5 \!$ & $\! 5 \!$ \\
		$\! 6 \!$ & $\! 10 \!$ & $\! 13 \!$ & $\! 14 \!$ & $\! 15 \!$ & $\! 4 \!$ & $\! 5 \!$ & $\! 6 \!$ & $\! 9 \!$ & $\! 10 \!$ & $\! 13 \!$ & $\! 14 \!$ & $\! 5 \!$ & $\! 10 \!$ & $\! 11 \!$ & $\! 12 \!$ & $\! 14 \!$ & $\! 10 \!$ & $\! 11 \!$ \\
		$\! 7 \!$ & $\! 12 \!$ & $\! 18 \!$ & $\! 17 \!$ & $\! 16 \!$ & $\! 8 \!$ & $\! 7 \!$ & $\! 18 \!$ & $\! 15 \!$ & $\! 11 \!$ & $\! 17 \!$ & $\! 16 \!$ & $\! 6 \!$ & $\! 16 \!$ & $\! 18 \!$ & $\! 17 \!$ & $\! 15 \!$ & $\! 18 \!$ & $\! 17 \!$ \\
		\hline
		\\ 
		\hline
		$\! 5 \!$ & $\! 5 \!$ & $\! 6 \!$ & $\! 6 \!$ & $\! 6 \!$ & $\! 6 \!$ & $\! 7 \!$ & $\! 7 \!$ & $\! 7 \!$ & $\! 7 \!$ & $\! 7 \!$ & $\! 8 \!$ & $\! 8 \!$ & $\! 8 \!$ & $\! 8 \!$ & $\! 9 \!$ & $\! 9 \!$ & $\! 9 \!$ & $\! 12 \!$ \\
		$\! 12 \!$ & $\! 13 \!$ & $\! 9 \!$ & $\! 10 \!$ & $\! 11 \!$ & $\! 13 \!$ & $\! 8 \!$ & $\! 10 \!$ & $\! 11 \!$ & $\! 12 \!$ & $\! 17 \!$ & $\! 10 \!$ & $\! 11 \!$ & $\! 12 \!$ & $\! 16 \!$ & $\! 10 \!$ & $\! 11 \!$ & $\! 16 \!$ & $\! 15 \!$ \\
		$\! 16 \!$ & $\! 15 \!$ & $\! 12 \!$ & $\! 17 \!$ & $\! 16 \!$ & $\! 14 \!$ & $\! 9 \!$ & $\! 13 \!$ & $\! 15 \!$ & $\! 14 \!$ & $\! 18 \!$ & $\! 15 \!$ & $\! 14 \!$ & $\! 13 \!$ & $\! 18 \!$ & $\! 14 \!$ & $\! 13 \!$ & $\! 17 \!$ & $\! 18 \!$ \\
		\hline
	\end{tabular}
	\caption{The $\mathrm{STS}(19)$ $\mathcal{S}_1$ with unique Veblen point 0.} \label{tab:sts19veblen1}
\end{table}

	Since the factor system $f_2$ coincides with $f_1$ everywhere except for any pair of points in the triple $\{P_7,P_8,P_9\}$ of the $\mathrm{STS}(9)$ $\mathcal{Q}$, the operation in $\mathcal{L}_{\mathcal{S}_2}$ is the same as in $\mathcal{L}_{\mathcal{S}_1}$ except for the following cases:
	\begin{align*}
		&(P_7, \Omega)\circ (P_8,\Omega)\circ(P_9,1)=(\bar\Omega,\Omega),\\
		&(P_7, \Omega)\circ (P_8,1)\circ(P_9,\Omega)=(\bar\Omega,\Omega),\\
		&(P_7, 1)\circ (P_8,\Omega)\circ(P_9,\Omega)=(\bar\Omega,\Omega),\\
		&(P_7, 1)\circ (P_8,1)\circ(P_9,1)=(\bar\Omega,\Omega).
	\end{align*}
	Hence the triples of $\mathcal{S}_2$ are obtained by those of $\mathcal{S}_1$ by the Pasch switch corresponding to substituting $\{7,8,9\}$, $\{7,17,18\}$, $\{8,16,18\}$ and $\{9,16,17\}$ with $\{7,8,18\}$, $\{7,9,17\}$, $\{8,9,16\}$ and $\{16,17,18\}$, as in \Cref{fig:paschswitch2}.
	
	\begin{figure}[H]
		\begin{center}
			\begin{tikzpicture}[scale=0.30] 
				\draw[line width=1] (-6,0) -- (0,10);
				\draw[line width=1] (6,0) -- (0,10);
				\draw[line width=1] (-6,0) -- (3,5);
				\draw[line width=1] (6,0) -- (-3,5);

				\fill[green] (-6,0) circle (10pt);
				\fill[blue] (6,0) circle (10pt);
				\fill (0,10) circle (10pt);
				\fill (3,5) circle (10pt);
				\fill (0,3.38) circle (10pt);
				\fill (-3,5) circle (10pt);
				
				\node at (0,11) {$7$};
				\node at (0,2) {$16$};
				\node at (-7,0) {$9$};
				\node at (7.25,0) {$18$};
				\node at (-4,5) {$8$};
				\node at (4.25,5) {$17$};
			\end{tikzpicture}
			\begin{tikzpicture}[scale=0.30] 
				\draw[->,decorate,decoration={snake,amplitude=.4mm,segment length=5mm,post length=1mm}] (-2,6) -- (2,6);
				\fill[white] (0,10) circle (10pt);
				\fill[white] circle (10pt);
			\end{tikzpicture}
			\begin{tikzpicture}[scale=0.30] 
				\draw[line width=1] (-6,0) -- (0,10);
				\draw[line width=1] (6,0) -- (0,10);
				\draw[line width=1] (-6,0) -- (3,5);
				\draw[line width=1] (6,0) -- (-3,5);

				\fill[blue] (-6,0) circle (10pt);
				\fill[green] (6,0) circle (10pt);
				\fill (0,10) circle (10pt);
				\fill (3,5) circle (10pt);
				\fill (0,3.38) circle (10pt);
				\fill (-3,5) circle (10pt);
				
				\node at (0,11) {$7$};
				\node at (0,2) {$16$};
				\node at (-7.25,0) {$18$};
				\node at (7,0) {$9$};
				\node at (-4,5) {$8$};
				\node at (4.25,5) {$17$};
			\end{tikzpicture}\caption{The Pasch switch transforming the triples of $\mathcal{S}_1$ into the triples of $\mathcal{S}_2$.}\label{fig:paschswitch2}
		\end{center}
	\end{figure}

	Therefore, we find the representation of the $\mathrm{STS}(19)$ $\mathcal{S}_2$ shown in \Cref{tab:sts19veblen2}, where the columns are the triples of $\mathcal{S}_2$.
	
	\begin{table}[H]
		\centering
		\begin{tabular}{*{19}{c}}
			\hline
			$\! 0 \!$ & $\! 0 \!$ & $\! 0 \!$ & $\! 0 \!$ & $\! 0 \!$ & $\! 0 \!$ & $\! 0 \!$ & $\! 0 \!$ & $\! 0 \!$ & $\! 1 \!$ & $\! 1 \!$ & $\! 1 \!$ & $\! 1 \!$ & $\! 1 \!$ & $\! 1 \!$ & $\! 1 \!$ & $\! 1 \!$ & $\! 2 \!$ & $\! 2 \!$ \\
			$\! 1 \!$ & $\! 2 \!$ & $\! 3 \!$ & $\! 4 \!$ & $\! 5 \!$ & $\! 6 \!$ & $\! 7 \!$ & $\! 8 \!$ & $\! 9 \!$ & $\! 2 \!$ & $\! 4 \!$ & $\! 5 \!$ & $\! 6 \!$ & $\! 11 \!$ & $\! 13 \!$ & $\! 14 \!$ & $\! 15 \!$ & $\! 4 \!$ & $\! 5 \!$ \\
			$\! 10 \!$ & $\! 11 \!$ & $\! 12 \!$ & $\! 13 \!$ & $\! 14 \!$ & $\! 15 \!$ & $\! 16 \!$ & $\! 17 \!$ & $\! 18 \!$ & $\! 3 \!$ & $\! 7 \!$ & $\! 9 \!$ & $\! 8 \!$ & $\! 12 \!$ & $\! 16 \!$ & $\! 18 \!$ & $\! 17 \!$ & $\! 9 \!$ & $\! 8 \!$ \\
			\hline
			\\ 
			\hline
			$\! 2 \!$ & $\! 2 \!$ & $\! 2 \!$ & $\! 2 \!$ & $\! 2 \!$ & $\! 3 \!$ & $\! 3 \!$ & $\! 3 \!$ & $\! 3 \!$ & $\! 3 \!$ & $\! 3 \!$ & $\! 3 \!$ & $\! 4 \!$ & $\! 4 \!$ & $\! 4 \!$ & $\! 4 \!$ & $\! 4 \!$ & $\! 5 \!$ & $\! 5 \!$ \\
			$\! 6 \!$ & $\! 10 \!$ & $\! 13 \!$ & $\! 14 \!$ & $\! 15 \!$ & $\! 4 \!$ & $\! 5 \!$ & $\! 6 \!$ & $\! 9 \!$ & $\! 10 \!$ & $\! 13 \!$ & $\! 14 \!$ & $\! 5 \!$ & $\! 10 \!$ & $\! 11 \!$ & $\! 12 \!$ & $\! 14 \!$ & $\! 10 \!$ & $\! 11 \!$ \\
			$\! 7 \!$ & $\! 12 \!$ & $\! 18 \!$ & $\! 17 \!$ & $\! 16 \!$ & $\! 8 \!$ & $\! 7 \!$ & $\! 18 \!$ & $\! 15 \!$ & $\! 11 \!$ & $\! 17 \!$ & $\! 16 \!$ & $\! 6 \!$ & $\! 16 \!$ & $\! 18 \!$ & $\! 17 \!$ & $\! 15 \!$ & $\! 18 \!$ & $\! 17 \!$ \\
			\hline
			\\ 
			\hline
			$\! 5 \!$ & $\! 5 \!$ & $\! 6 \!$ & $\! 6 \!$ & $\! 6 \!$ & $\! 6 \!$ & $\! 7 \!$ & $\! 7 \!$ & $\! 7 \!$ & $\! 7 \!$ & $\! 7 \!$ & $\! 8 \!$ & $\! 8 \!$ & $\! 8 \!$ & $\! 8 \!$ & $\! 9 \!$ & $\! 9 \!$ & $\! 12 \!$ & $\! 16 \!$ \\
			$\! 12 \!$ & $\! 13 \!$ & $\! 9 \!$ & $\! 10 \!$ & $\! 11 \!$ & $\! 13 \!$ & $\! 8 \!$ & $\! 9 \!$ & $\! 10 \!$ & $\! 11 \!$ & $\! 12 \!$ & $\! 9 \!$ & $\! 10 \!$ & $\! 11 \!$ & $\! 12 \!$ & $\! 10 \!$ & $\! 11 \!$ & $\! 15 \!$ & $\! 17 \!$ \\
			$\! 16 \!$ & $\! 15 \!$ & $\! 12 \!$ & $\! 17 \!$ & $\! 16 \!$ & $\! 14 \!$ & $\! 18 \!$ & $\! 17 \!$ & $\! 13 \!$ & $\! 15 \!$ & $\! 14 \!$ & $\! 16 \!$ & $\! 15 \!$ & $\! 14 \!$ & $\! 13 \!$ & $\! 14 \!$ & $\! 13 \!$ & $\! 18 \!$ & $\! 18 \!$ \\
			\hline
		\end{tabular}
		\caption{The $\mathrm{STS}(19)$ $\mathcal{S}_1$ with unique Veblen point 0.}\label{tab:sts19veblen2}
	\end{table}

			\section{Steiner triple systems of order $27$}
		By \Cref{eq:numvebpoints}, the number of Veblen points of a Steiner triple system $\mathcal{S}$ of order $27$ is at most $1$. Hence, we can obtain $\mathcal{L_S}$ as a Schreier extension of its center $\mathcal{L_N}$, which is the group of order $2$, by a Steiner loop $\mathcal{L_Q}$ of order $14$ corresponding to one of the two non-isomorphic $\mathrm{STS}(13)$s. The following Theorem determines the number of $\mathrm{STS}(27)$s with (exactly) one Veblen point, investigating both cases for the quotient system $\mathcal{Q}$. 
		
		\begin{theorem}
			There are $1736$ non-isomorphic $\mathrm{STS}(27)$s containing (exactly) one Veblen point. Among these, $1504$ have the non-cyclic $\mathrm{STS}(13)$ as the quotient system, and $232$  have the cyclic $\mathrm{STS}(13)$ as the quotient system.
		\end{theorem}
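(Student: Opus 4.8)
The plan is to follow exactly the same computational scheme used in the $\mathrm{STS}(19)$ case, but now carried out separately for each of the two non-isomorphic $\mathrm{STS}(13)$s playing the role of the quotient $\mathcal{Q}$. Since a Veblen point is precisely a non-trivial central element of $\mathcal{L_S}$, and \Cref{eq:numvebpoints} forces at most one Veblen point in order $27$, every such $\mathrm{STS}(27)$ arises as a Schreier extension of $\mathcal{L_N}=\{\Omega,1\}$ (the group of order $2$, which is then the \emph{whole} center) by some $\mathcal{L_Q}$ of order $14$. A $\mathrm{STS}(13)$ has $b=26$ triples, so for each choice of $\mathcal{Q}$ the group $\mathrm{Ext}_S(\mathcal{L_N},\mathcal{L_Q})$ has order $|\mathcal{L_N}|^{b}=2^{26}$, and the co-boundary subgroup $\mathrm{B}^2(\mathcal{L_N},\mathcal{L_Q})$ has order $2^{12}$ (coming from functions $\varphi\colon\mathcal{L_Q}\to\mathcal{L_N}$ with $\varphi(\bar\Omega)=\Omega$, i.e.\ $2^{13}$ functions modulo the constant $\Omega$-value, so $2^{13-1}$ after accounting for the kernel). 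Hence the number of non-equivalent extensions is $2^{26}/2^{12}=2^{14}=16384$ for each quotient.

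Next I would compute, for each of the two quotients, the orbits of the action $(\alpha,\beta)(f)=\alpha f\beta^{-1}$ of $\mathrm{Aut}(\mathcal{L_N})\times\mathrm{Aut}(\mathcal{L_Q})$ on the $16384$ non-equivalent factor systems. Because $\mathrm{Aut}(\mathcal{L_N})$ is trivial, this reduces to the action of $\mathrm{Aut}(\mathcal{L_Q})$ alone, which is the automorphism group of the corresponding $\mathrm{STS}(13)$. Since $\mathcal{L_N}$ is the whole center here, the paragraph preceding \Cref{theoindex4} guarantees that these orbits are in bijection with the isomorphism classes of $\mathrm{STS}(27)$s having exactly one Veblen point and the chosen $\mathcal{Q}$ as quotient. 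The count of orbits, obtained by the algorithms of \Cref{sec:descriptionalgorithms} (e.g.\ via a Burnside/orbit-counting computation or explicit orbit enumeration), would then give $1504$ for the non-cyclic $\mathrm{STS}(13)$ and $232$ for the cyclic one, summing to $1736$.

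The one genuine subtlety, rather than a mere computation, is ensuring that the two quotients really do give \emph{disjoint} families of isomorphism classes, so that the counts simply add. This requires checking that no $\mathrm{STS}(27)$ with one Veblen point can be realized simultaneously over both quotients; this holds because the quotient $\mathcal{L_S}/\mathcal{L_N}$ is an isomorphism invariant of the pair $(\mathcal{L_S},\mathcal{L_N})$ once $\mathcal{L_N}$ is pinned down as the center, and the two $\mathrm{STS}(13)$s are non-isomorphic. One should also verify that none of the resulting extensions acquires \emph{additional} Veblen points outside $\mathcal{N}$ (which would make $\mathcal{L_N}$ fail to be the full center and could collapse distinct orbits); by \Cref{rmk:furtherveblen} this is controlled by condition \eqref{eq:furtherveblen}, and since an $\mathrm{STS}(27)$ admits at most one Veblen point by \Cref{eq:numvebpoints}, the extra-Veblen case cannot arise and the orbit count is clean.

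I expect the main obstacle to be purely computational scale: with $2^{26}$ factor systems before quotienting, the orbit enumeration over $\mathrm{Aut}(\mathcal{L_Q})$ must be organized carefully (working modulo co-boundaries to cut the space to $2^{14}$, then applying the automorphism action) to remain tractable, and the two automorphism groups differ substantially in order between the cyclic and non-cyclic $\mathrm{STS}(13)$, which is what produces the asymmetric split $232$ versus $1504$.
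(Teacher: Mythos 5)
Your overall scheme is exactly the paper's: count the $2^{26}$ factor systems, quotient by co-boundaries, then count orbits under $\mathrm{Aut}(\mathcal{L_Q})$ alone (since $\mathrm{Aut}(\mathcal{L_N})$ is trivial), treating the two non-isomorphic $\mathrm{STS}(13)$s separately; your disjointness remark and the observation that extra Veblen points cannot arise in order $27$ are also consistent with the paper. However, there is a genuine error in your co-boundary count. You claim $|\mathrm{B}^2(\mathcal{L_N},\mathcal{L_Q})|=2^{12}$, arguing that the $2^{13}$ normalized functions $\varphi\colon\mathcal{L_Q}\to\mathcal{L_N}$ with $\varphi(\bar\Omega)=\Omega$ must be reduced by a further factor of $2$ ``after accounting for the kernel.'' There is no such factor: the kernel of $\varphi\mapsto\delta^1\varphi$ consists precisely of the loop homomorphisms $\mathcal{L_Q}\to\mathcal{L_N}$, and a non-trivial such homomorphism would have as kernel a subloop of index $2$, i.e.\ of order $7$, in the Steiner loop of order $14$ --- equivalently a subsystem $\mathrm{STS}(6)$ of the $\mathrm{STS}(13)$, which does not exist since $6$ is not admissible. (The constant function with value $1$ is excluded by the normalization $\varphi(\bar\Omega)=\Omega$, so it contributes nothing.) Hence $\delta^1$ is injective on the normalized functions, $|\mathrm{B}^2|=2^{13}$ as the paper states, and the number of non-equivalent extensions is $2^{26}/2^{13}=2^{13}=8192$, not your $2^{14}=16384$.

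This is not a harmless slip, because your figure is arithmetically incompatible with the theorem's conclusion and would have failed a basic sanity check: the non-cyclic $\mathrm{STS}(13)$ has automorphism group $S_3$ of order $6$, so a set of $16384$ equivalence classes would split into at least $\lceil 16384/6\rceil = 2731$ orbits, which already exceeds the claimed $1504$; likewise $16384/39 \approx 420 > 232$ for the cyclic case with $F_{39}$ of order $39$. With the corrected count $8192$, the averages $8192/1504\approx 5.4$ and $8192/232\approx 35.3$ sit plausibly just below the respective group orders, matching the paper's computation. Note also that this triviality of the kernel is specific to the quotient orders here: in the paper's $\mathrm{STS}(31)$ section the analogous kernel is non-trivial exactly when the quotient $\mathrm{STS}(15)$ contains Fano subsystems, which is why $|\mathrm{B}^2|$ varies there --- so the kernel computation genuinely has to be done case by case rather than guessed.
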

		
			\begin{proof}
				Let  $\mathcal{S}$ be an $\mathrm{STS}(27)$ with one Veblen point, let $\mathcal{L_N}$ be the center of $\mathcal{L_S}$ and $\mathcal{L_Q}$ the corresponding quotient loop. Since $\mathcal{L_N}$ has order $2$ and $\mathcal{Q}$ has $26$ triples, the total number of all possible factor systems in $\mathrm{Ext}_S(\mathcal{L_N},\mathcal{L_Q})$ is $2^{26}$.
				
				Now, we apply the algorithms described in \Cref{sec:descriptionalgorithms}. The set of co-boundaries $\mathrm{B}^2$ has order $2^{13}$, so the number of non-equivalent extensions is $2^{13}$. We note now that, since $\mathrm{Aut}(\mathcal{L_N})$ is trivial, the action of the group $\mathrm{Aut}(\mathcal{L_N})\times\mathrm{Aut}(\mathcal{L_Q})$ can be reduced to the action of just $\mathrm{Aut}(\mathcal{L_Q})$, which can be either $S_3$ or $F_{39}$.
				
				If $\mathcal{Q}$ is the non-cyclic $\mathrm{STS}(13)$, then $\mathrm{Aut}(\mathcal{L_Q})$ is the symmetric group $S_3$. Under its action, the set of non-equivalent extensions is divided into $1504$ orbits, each of which represents one isomorphism class of extensions of $\mathcal{L_N}$ by $\mathcal{L_Q}$. Since $\mathcal{L_N}$ is the center of $\mathcal{L_S}$, these are exactly the isomorphism classes of $\mathrm{STS}(27)$s with one Veblen point and quotient system the non-cyclic $\mathrm{STS}(13)$.
		
				Let now $\mathcal{Q}$ be the cyclic $\mathrm{STS}(13)$. In this case $\mathrm{Aut}(\mathcal{L_Q})$ is the subgroup of $\mathrm{AGL}(1,13)$ of all the affine transformations of the type $x\mapsto ax+b$ with $x,b\in\mathbb{Z}/13\mathbb{Z}$ and $a\in \{1,3,9\}$, which is, up to isomorphism, the unique Frobenius group $F_{39}$ of order 39. Under its action, the set of non-equivalent extensions is divided into $232$ orbits, which, for the same reason as the previous case, are exactly the isomorphism classes of $\mathrm{STS}(27)$s with one Veblen point and quotient system the cyclic $\mathrm{STS}(13)$.	
		\end{proof}

		\section{Steiner triple systems of order $31$}
		Lastly, by \Cref{eq:numvebpoints} and \Cref{theoindex4}, we deduce that the number of Veblen points of an $\mathrm{STS}(31)$ can either be one, three, or thirty-one. In the last case, it is the projective geometry $\mathrm{PG}(4,2)$, thus we focus to the cases of $\mathrm{STS}(31)$s with one or three Veblen points.
		
		If a Steiner triple system of order $31$ contains one Veblen point, then the corresponding quotient system $\mathcal{Q}$ is an $\mathrm{STS}(15)$. Since there are $80$ possibilities for such a quotient system (see  \cite{Handbook}), we focus our attention on six cases that in our opinion are the most interesting, namely: 
		\begin{enumerate}
			\item $\mathrm{STS}(15)$ $\# 1$, that is, $\mathrm{PG}(3,2)$;
			\item $\mathrm{STS}(15)$ $\# 2$, which is the only other one with  a Veblen point itself;
			\item $\mathrm{STS}(15)$ $\# 3$, which is the one with the largest number of Pasch configurations without containing Veblen points;
			\item $\mathrm{STS}(15)$ $\# 7$, which is the one with the second-largest automorphism group after $\mathrm{PG}(3,2)$; 
			\item $\mathrm{STS}(15)$ $\# 61$, which, among the ones containing a Fano plane, is the one with the least number of Pasch configurations;
			\item $\mathrm{STS}(15)$ $\# 80$, which is the only one containing no Pasch configurations.
		\end{enumerate}
		 
		 We have the following partial result. 
		
			\begin{theorem}
				The number of non-isomorphic $\mathrm{STS}(31)$s with exactly one Veblen point and quotient system $\mathcal{Q}$ of order $15$ is given by \Cref{tab:sts31with1}.
				\begin{table}[H]
					\centering
					\begin{tabular}{|c|c|}
						\hline
						$\mathcal{Q}$ & count \\
						\hline
						$\mathrm{PG}(3,2)$ & $278$ \\
						$\mathrm{STS}(15) \# 2$ & $48072$ \\
						$\mathrm{STS}(15) \# 3$ & $47744$ \\
						$\mathrm{STS}(15) \# 7$ & $16520$ \\
						$\mathrm{STS}(15) \# 61$ & $99952$ \\
						$\mathrm{STS}(15) \# 80$ & $17888$ \\
						\hline
					\end{tabular}\caption{The number of $\mathrm{STS}(31)$s with one Veblen point and corresponding quotient system $\mathcal{Q}$.}\label{tab:sts31with1}
				\end{table}
			\end{theorem}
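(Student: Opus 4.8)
The plan is to treat each of the six quotient systems separately, following the same extension-theoretic scheme used in the proofs for orders $19$ and $27$. Fix one of the listed $\mathrm{STS}(15)$s as $\mathcal{Q}$, so that $\mathcal{L_Q}$ is the associated Steiner loop of order $16$ and $\mathcal{L_N}$ is the group of order $2$. By \Cref{eq:numvebpoints} and \Cref{theoindex4}, an $\mathrm{STS}(31)$ with a single Veblen point is precisely a Schreier extension of $\mathcal{L_N}$ by $\mathcal{L_Q}$ in which $\mathcal{L_N}$ is the whole center. I would first enumerate $\mathrm{Ext}_S(\mathcal{L_N},\mathcal{L_Q})$: since an $\mathrm{STS}(15)$ has $35$ triples, this group has order $2^{35}$ by the count following \Cref{consttriples}.

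Next I would pass to non-equivalent extensions by quotienting out the co-boundaries. Here, unlike orders $19$ and $27$, the size of $\mathrm{B}^2(\mathcal{L_N},\mathcal{L_Q})$ genuinely depends on $\mathcal{Q}$: the map $\varphi\mapsto\delta^1\varphi$ has kernel $\mathrm{Hom}(\mathcal{L_Q},\mathcal{L_N})$, the space of $\mathbb{Z}/2$-valued functions summing to zero on every triple (equivalently, the indicator functions of point sets meeting each triple evenly). Writing $d$ for the dimension of this space, one gets $|\mathrm{B}^2|=2^{15-d}$, so the number of non-equivalent extensions is $2^{35}/2^{15-d}=2^{20+d}$. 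For $\mathrm{PG}(3,2)$ the loop $\mathcal{L_Q}$ is the elementary abelian group $(\mathbb{Z}/2)^4$, whence $\mathrm{Hom}(\mathcal{L_Q},\mathbb{Z}/2)$ has dimension $d=4$; for the remaining quotients the value of $d$ is supplied by the algorithm of \Cref{sec:descriptionalgorithms}.

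The decisive new step, absent in the smaller orders, is to retain only the extensions with \emph{exactly} one Veblen point. By \Cref{rmk:furtherveblen}, an extension acquires an extra Veblen point precisely when $\mathcal{Q}$ has a Veblen point $P$ at which the identity \eqref{eq:furtherveblen} holds; a short check shows that adding a co-boundary leaves both sides of \eqref{eq:furtherveblen} unchanged, so this condition is well defined on non-equivalent extensions. For the four quotients $\#3$, $\#7$, $\#61$, $\#80$, which contain no Veblen point, the condition can never be triggered, and every non-equivalent extension already has $\mathcal{L_N}$ as its full center. For $\mathrm{PG}(3,2)$ (all $15$ points Veblen) and for $\#2$ (one Veblen point) I would discard the factor systems satisfying \eqref{eq:furtherveblen} at some Veblen point of $\mathcal{Q}$; in particular the null factor system, which reproduces $\mathrm{PG}(4,2)$ with its $31$ Veblen points, is removed.

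Finally, on the surviving set I would count the orbits of $\mathrm{Aut}(\mathcal{L_Q})=\mathrm{Aut}(\mathcal{Q})$ acting by $f\mapsto f\beta^{-1}$, the factor $\mathrm{Aut}(\mathcal{L_N})$ being trivial. Since $\mathcal{L_N}$ is the full center throughout this set, the characterization recalled in the excerpt identifies these orbits with the isomorphism classes of the corresponding $\mathrm{STS}(31)$s, producing the six entries of \Cref{tab:sts31with1}. I expect the main obstacle to be computational rather than conceptual: the relevant sets have between $2^{20}$ and $2^{24}$ elements, so the orbit enumeration under groups as large as $\mathrm{Aut}(\mathrm{PG}(3,2))=\mathrm{GL}(4,2)$ of order $20160$ must be organized efficiently, and for $\mathrm{PG}(3,2)$ and $\#2$ the filtering by \eqref{eq:furtherveblen} must be interleaved with the orbit computation so that only genuine one-Veblen-point systems are counted.
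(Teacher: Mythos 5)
Your proposal is correct and follows essentially the same route as the paper: enumerate the $2^{35}$ factor systems, quotient by co-boundaries (your identification of $|\mathrm{B}^2|=2^{15-d}$ with $d=\dim\mathrm{Hom}(\mathcal{L_Q},\mathcal{L_N})$ reproduces the paper's tabulated values, e.g.\ $2^{11}$ for $\mathrm{PG}(3,2)$), count orbits under $\mathrm{Aut}(\mathcal{L_Q})$, and for quotients $\#1$ and $\#2$ discard extensions acquiring extra Veblen points via \Cref{rmk:furtherveblen}. The only differences are presentational: the paper computes orbits first and filters afterwards while you interleave the two, and you make explicit the (correct, and tacitly needed) observation that condition \eqref{eq:furtherveblen} at a Veblen point $P$ of $\mathcal{Q}$ is invariant under adding a co-boundary --- more precisely, both sides change by the same amount $\varphi(P)+\varphi(Q)+\varphi(R)+\varphi(PQR)$, unambiguous by centrality of $P$ in $\mathcal{L_Q}$.
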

		
		\begin{proof}
				Let $\mathcal{S}$ be an $\mathrm{STS}(31)$ with (at least) one Veblen point. We can obtain the Steiner loop $\mathcal{L_S}$ as a Schreier extension of its central subloop $\mathcal{L_N}$ of order $2$ containing the given Veblen point, by a Steiner loop $\mathcal{L_Q}$ of order $16$ corresponding to an $\mathrm{STS}(15)$. Since $\mathcal{L_N}$ has order $2$ and $\mathcal{Q}$ has $35$ triples, in any case, the total number of all possible factor systems in $\mathrm{Ext}_S(\mathcal{L_N},\mathcal{L_Q})$ is $2^{35}$.
				
				Now, we apply the algorithms described in \Cref{sec:descriptionalgorithms}. The order of the set of co-boundaries $\mathrm{B}^2$ is described, in all of the $6$ cases that we want to analyze, by \Cref{tab:numcoboundQ}.
				\begin{table}[H]
					\centering
					\begin{tabular}{|c|c|}
						\hline
						$\mathcal{Q}$ & $|\mathrm{B}^2|$  \\
						\hline 
						$\mathrm{PG}(3,2)$ & $2^{11}$  \\
						$\mathrm{STS}(15)\ \# 2$ & $2^{12}$  \\
						$\mathrm{STS}(15)\ \# 3$ & $2^{13}$  \\
						$\mathrm{STS}(15)\ \# 7$ & $2^{13}$ \\
						$\mathrm{STS}(15)\ \# 61$ & $2^{14}$  \\
						$\mathrm{STS}(15)\ \# 80$ & $2^{15}$  \\
						\hline
					\end{tabular}\caption{The number of coboundaries for the corresponding $\mathcal{Q}$.}\label{tab:numcoboundQ}
				\end{table}
				Consequently, we can count the non-equivalent extensions in each case, as described by  \Cref{tab:noneqQ}.
				\begin{table}[H]
					\centering
					\begin{tabular}{|c|c|}
						\hline
						$\mathcal{Q}$ & non-equivalent extensions  \\
						\hline 
						$\mathrm{PG}(3,2)$ & $2^{24}$  \\
						$\mathrm{STS}(15)\ \# 2$ & $2^{23}$  \\
						$\mathrm{STS}(15)\ \# 3$ & $2^{22}$  \\
						$\mathrm{STS}(15)\ \# 7$ & $2^{22}$ \\
						$\mathrm{STS}(15)\ \# 61$ & $2^{21}$  \\
						$\mathrm{STS}(15)\ \# 80$ & $2^{20}$  \\
						\hline
					\end{tabular}\caption{Number of non equivalent factor systems for the corresponding $\mathcal{Q}$.}\label{tab:noneqQ}
				\end{table}

				We note now that, since $\mathrm{Aut}(\mathcal{L_N})$ is trivial, the action of the group $\mathrm{Aut}(\mathcal{L_N})\times\mathrm{Aut}(\mathcal{L_Q})$ can be reduced to the action of just $\mathrm{Aut}(\mathcal{L_Q})$. The order of this group in every case is given in \Cref{tab:autQ}.
				
				\begin{table}[H]
					\centering
					\begin{tabular}{|c|c|}
						\hline
						$\mathcal{Q}$ & $|\mathrm{Aut}(\mathcal{Q})|$\\
						\hline
						$\mathrm{PG}(3,2) $& $20160$ \\
						$\mathrm{STS}(15) \# 2 $& $192$\\
						$\mathrm{STS}(15) \# $3 & $96$ \\
						$\mathrm{STS}(15) \# 7$ & $288$ \\
						$\mathrm{STS}(15) \# 61$ & $21$ \\
						$\mathrm{STS}(15) \# $80 & $60$ \\
						\hline
					\end{tabular}\caption{Order of the automorphism group of $\mathcal{Q}$.}\label{tab:autQ}
				\end{table}	
				\noindent Under this action, the equivalence classes of factor systems are divided into $1240$ orbits for the $\mathrm{STS}(15)$ $\# 1$, $48080$ for $\# 2$, $47744$ for  $\# 3$, $16520$ for $\# 7$, $99952$ for  $\# 61$, and $17888$ for  $\# 80$. 
				
				Since  $\mathrm{STS}(15)$ $\# 3$, $7$, $61$, $80$ have no Veblen points, in these cases $\mathcal{L_N}$ is the whole center of $\mathcal{L_S}$. This implies that the orbits coincide with the isomorphism classes of the corresponding $\mathrm{STS}(31)$s. 
				
				For the $\mathrm{STS}(15)$s $\# 1$ and $2$, the situation is different, due to the fact that they have Veblen points themselves. This means that $\mathcal{L_N}$ could be (in general) just a proper subgroup of the center of $\mathcal{L_S}$, that is, $\mathcal{S}$ could have more than one Veblen point. For this reason we need a further reduction, in order to cut out all the factor systems which produce an $\mathrm{STS}(31)$ with more than $1$ Veblen point. As seen in \Cref{rmk:furtherveblen}, we need to count just the factor systems $f$ for which does not exist any point $P\in\mathcal{Q}$ that fulfills the following condition
				\begin{equation*}
					f(P,Q)+f(PQ,R)=f(Q,R)+f(P,QR),
				\end{equation*}
				for every $Q,R\in\mathcal{Q}$. 
				Since the $\mathrm{STS}(15)$ $\# 1$ is projective, this study needs to be done considering all the points of the Steiner triple system. For the $\mathrm{STS}(15)$ $\# 2$ we can consider just the point labeled with $0$ (see \cite[Table 1.28, p. 30]{Handbook}) since, in this case, it is the only Veblen point. After the computation, we found out that there are $278$ non-isomorphic $\mathrm{STS}(31)$s with precisely one Veblen point and quotient system the $\mathrm{STS}(15)$ $\# 1$, and $48072$ non-isomorphic $\mathrm{STS}(31)$s with precisely one Veblen point and quotient system the $\mathrm{STS}(15)$ $\# 2$.
			\end{proof}

	As we observed in this last case of $\mathrm{STS}(31)$s with precisely one Veblen point, the problem of constructing all Steiner triple systems of given size and number of Veblen points using Schreier extensions becomes progressively challenging with size growth, in particular when the ratio of the order and the number of Veblen points becomes larger. This is due to the fact that, if the number of Veblen points is relatively small, the number of non-isomorphic cases for the quotient system $\mathcal{Q}$ increases sensibly. Indeed, the problem of enumerating $\mathrm{STS}(31)$s with three Veblen points is much easier to deal with, as shown in the next result.

	\begin{theorem}\label{th313}
	There are only $2$ non-isomorphic $\mathrm{STS}(31)$s with exactly three Veblen points.
\end{theorem}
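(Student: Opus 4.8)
The plan is to realize any such system as a Schreier extension and reduce the count to a finite orbit problem that turns out to be governed by matrix rank. First I would pin down the extension data. If $\mathcal{S}$ is an $\mathrm{STS}(31)$ with exactly three Veblen points, then its center $\mathcal{L_N}$ (the Veblen points together with the identity) is an elementary abelian $2$-group of order $4$, and the quotient $\mathcal{L_Q}$ is a Steiner loop of order $8$, i.e.\ the loop of the unique $\mathrm{STS}(7)$, the Fano plane; thus $\mathcal{L_Q}\cong(\mathbb{Z}/2)^3$. The structural point I would extract from \Cref{theoindex4} and \Cref{eq:numvebpoints} is a dichotomy: a center of order $8$ or $16$ would force the quotient to have order $\le 4$, hence the system would be projective (with $31$ Veblen points), contradicting the order of that center. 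Consequently, a \emph{non-projective} $\mathrm{STS}(31)$ carrying a central subgroup $\mathcal{L_N}$ of order $4$ has $\mathcal{L_N}$ as its whole center, and therefore exactly three Veblen points. This is what licenses counting isomorphism classes as orbits of $\mathrm{Aut}(\mathcal{L_N})\times\mathrm{Aut}(\mathcal{L_Q})$ on non-equivalent factor systems.

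Next I would count the relevant classes. Since $\mathcal{Q}$ has $7$ triples, $|\mathrm{Ext}_S(\mathcal{L_N},\mathcal{L_Q})|=4^7=2^{14}$. The coboundary operator $\delta^1$ is defined on functions $\varphi\colon\mathcal{L_Q}\to\mathcal{L_N}$ with $\varphi(\bar\Omega)=\Omega$, a space of dimension $7\cdot 2=14$, and its kernel is exactly $\mathrm{Hom}((\mathbb{Z}/2)^3,(\mathbb{Z}/2)^2)$, of order $2^6$; hence $|\mathrm{B}^2|=2^8$ and the number of non-equivalent extensions is $2^{6}=64$. I would then identify the space of classes with $\mathbb{F}_2^2\otimes W$, where $W$ is the reduced line module of the Fano plane, namely the cokernel over $\mathbb{F}_2$ of its $7\times 7$ incidence map; since that matrix has $2$-rank $4$, we get $\dim W=3$. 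Concretely, a factor system is a labelling of the $7$ triples by $\mathbb{F}_2^2$, and since both the factor-system space and the coboundary space split as a direct sum over the two $\mathbb{F}_2$-coordinates, the class space is $W\oplus W=\mathbb{F}_2^2\otimes W$, i.e.\ a $2\times 3$ matrix over $\mathbb{F}_2$.

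The decisive step is the orbit count. Here $\mathrm{Aut}(\mathcal{L_N})=\mathrm{GL}_2(\mathbb{F}_2)$ acts on the $\mathbb{F}_2^2$-factor, while $\mathrm{Aut}(\mathcal{L_Q})=\mathrm{Aut}(\mathrm{PG}(2,2))$ acts on $W$. Because $W$ is a faithful $3$-dimensional $\mathbb{F}_2$-module for a group of order $168=|\mathrm{GL}_3(\mathbb{F}_2)|$, this action is the full $\mathrm{GL}_3(\mathbb{F}_2)=\mathrm{GL}(W)$. By the rule $(\alpha,\beta)(f)=\alpha f\beta^{-1}$, the induced action on $\mathbb{F}_2^2\otimes W$ is left and right multiplication of $2\times 3$ matrices by $\mathrm{GL}_2(\mathbb{F}_2)\times\mathrm{GL}_3(\mathbb{F}_2)$, whose complete invariant is the rank. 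A $2\times 3$ matrix over $\mathbb{F}_2$ has rank $0$, $1$, or $2$, so there are exactly three orbits; as a check, the orbit sizes are $1$, $21$, and $42$, summing to $64$.

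Finally I would sort these orbits by number of Veblen points. The rank-$0$ orbit is the null class, whose extension is the elementary abelian group $(\mathbb{Z}/2)^5$, i.e.\ $\mathrm{PG}(4,2)$, with $31$ Veblen points; I discard it. For the rank-$1$ and rank-$2$ orbits the loop is non-associative, since the only exponent-$2$ associative extension is the split one (an exponent-$2$ group extension is forced to split, as a non-split class would introduce an element of order $4$), which is the null class; hence the corresponding system is non-projective, and by the dichotomy above it has exactly three Veblen points, so $\mathcal{L_N}$ is its whole center and the orbits coincide with isomorphism classes. One can confirm this directly via \Cref{rmk:furtherveblen}, checking that \Cref{eq:furtherveblen} fails for every $P\neq\bar\Omega$. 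This leaves exactly $2$ isomorphism classes. I expect the main obstacle to be the algebraic identification underlying the whole reduction: showing $\dim W=3$ with $\mathrm{Aut}(\mathcal{L_Q})$ realizing all of $\mathrm{GL}_3(\mathbb{F}_2)$ (equivalently, the $2$-rank computation for the Fano incidence matrix) and pinning the rank-$0$/projective correspondence through the exponent-$2$ constraint; once these are in place, the conclusion reduces to the elementary fact that matrix equivalence is detected by rank. All of the above counts can alternatively be verified with the algorithms of \Cref{sec:descriptionalgorithms}.
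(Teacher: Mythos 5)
Your proposal is correct and reaches the paper's count of $2$, but it replaces the paper's two computational steps with conceptual arguments. First, where the paper simply reports that the $2^6=64$ equivalence classes fall into $3$ orbits under $\mathrm{PGL}(2,2)\times\mathrm{PGL}(3,2)$ (a fact obtained by machine enumeration), you identify the class space with $2\times 3$ matrices over $\mathbb{F}_2$: the coboundary space is the image of the Fano point--line incidence map, whose $2$-rank is $4$ (its column space is the $[7,4]$ Hamming code), so the quotient is $\mathbb{F}_2^2\otimes W$ with $\dim W=3$, and the action becomes left-right multiplication by $\mathrm{GL}_2(\mathbb{F}_2)\times\mathrm{GL}_3(\mathbb{F}_2)$, whose orbits are exactly the rank classes $0,1,2$; your orbit sizes $1+21+42=64$ confirm this. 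The only points you assert rather than prove are the $2$-rank computation and the surjectivity of $\mathrm{Aut}(\mathcal{L_Q})\to\mathrm{GL}(W)$, but both are standard (faithfulness of the module plus $|\mathrm{PGL}(3,2)|=|\mathrm{GL}_3(\mathbb{F}_2)|=168$ gives surjectivity), and you correctly flag them as the crux. Second, where the paper sorts the orbits by exhibiting representatives $f_1,f_2$ and hand-checking that condition \eqref{eq:furtherveblen} of \Cref{rmk:furtherveblen} fails for every $P$, you argue structurally: \eqref{eq:furtherveblen} holding for all $P$ is precisely the cocycle identity, i.e.\ associativity of $\mathcal{L_S}$, and an associative exponent-$2$ extension is elementary abelian, hence splits, hence its factor system is a coboundary --- so only the rank-$0$ class is projective, and your dichotomy via \Cref{theoindex4} (a non-projective $\mathrm{STS}(31)$ cannot have center of order $8$, $16$ or $32$, so a central subgroup of order $4$ is the whole center) yields exactly three Veblen points for the other two orbits. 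What each approach buys: the paper's verification is concrete and produces explicit triple lists, while yours is machine-free, explains \emph{why} there are three orbits (rank is a complete invariant), and would generalize to other parameters where the incidence-code rank is known; a pleasant by-product is that you silently correct the paper's typo $4^7=16383$ to $2^{14}=16384$.
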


\begin{proof}
	Let $\mathcal{S}$ be an $\mathrm{STS}(31)$ with (at least) three Veblen points. We can obtain the Steiner loop $\mathcal{L_S}$ as a Schreier extension of its central subloop $\mathcal{L_N}$ corresponding to the three given Veblen points, which is the elementary abelian  $2$-group of order $4$, by the unique Steiner loop $\mathcal{L_Q}$ of order $8$ corresponding to the $\mathrm{STS}(7)$. Since $\mathcal{L_N}$ has order $4$ and $\mathcal{Q}$ has $7$ triples, the total number of all possible factor systems in $\mathrm{Ext}_S(\mathcal{L_N},\mathcal{L_Q})$ is $4^{7}=16383$.	The set of co-boundaries $\mathrm{B}^2$ has order $2^8=256$. 
	Consequently, the number of non-equivalent extensions is  $2^6=64$.
	The group $\mathrm{Aut}(\mathcal{L_N})\times\mathrm{Aut}(\mathcal{L_Q})$ is $ \mathrm{PGL}(2,2)\times \mathrm{PGL}(3,2)$, which has order $1008$.  Under its action, the $64$ equivalence classes of factor systems are divided into $3$ orbits. 
	
	Since $\mathcal{Q}$ has Veblen points itself, $\mathcal{L_N}$ could be, in general, a proper subgroup of the center of $\mathcal{L_S}$. Hence $\mathcal{S}$ could have more than three Veblen points, and that would be the case where $\mathcal{S}$ is projective. Therefore, these orbits do not necessarily coincide with the isomorphism classes of $\mathrm{STS}(19)$s with three Veblen points. We need to perform an analysis to see if some of these $3$ resulting factor systems produce a projective geometry.
	
	In order to do this, let us to describe explicitly the three obtained  factor systems. We see $\mathcal{L_N}$ as $\mathrm{GF}(2)^2$ and $\mathcal{L_Q}$ as $\mathrm{GF}(2)^3$, and we give the Fano plane $\mathcal{Q}$ the following coordinates, as in \Cref{fig:fanoforsts31}.
	\begin{equation*}
		P_1=[0,0,1], \quad P_2=[0,1,0], \quad P_3=[0,1,1], \quad P_4=[1,0,0], \quad 	P_5=[1,0,1], \quad P_6=[1,1,0], \quad P_7=[1,1,1].
	\end{equation*}
	
		\begin{figure}[H]
		\begin{center}
			\begin{tikzpicture}[scale=0.35]
				\draw[line width=1] (-6,0) -- (6,0) -- (0,10) -- cycle;
				\draw[line width=1] (-6,0) -- (3,5);
				\draw[line width=1] (6,0) -- (-3,5);
				\draw[line width=1] (0,10) -- (0,0);
				
				\fill (-6,0) circle (10pt);
				\fill (6,0) circle (10pt);
				\fill (0,10) circle (10pt);
				\fill (0,0) circle (10pt);
				\fill (3,5) circle (10pt);
				\fill (0,3.38) circle (10pt);
				\fill (-3,5) circle (10pt);
				
				\draw[line width=1] (0,3.38) circle [radius=3.38];
				
				\node at (0,11) {$P_4$};
				\node at (0,-1) {$P_3$};
				\node at (2,3.38) {$P_7$};
				\node at (-7.5,0) {$P_1$};
				\node at (7.5,0) {$P_2$};
				\node at (-4,5) {$P_5$};
				\node at (4,5) {$P_6$};
			\end{tikzpicture}\caption{The Fano plane.}\label{fig:fanoforsts31}
		\end{center}
	\end{figure}
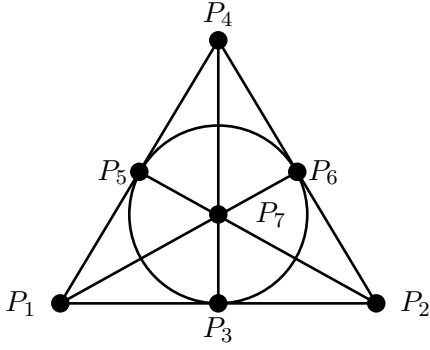
	
	The three orbits that we obtained are represented, respectively, by the following three factor systems:
	\begin{align*}
		&f_0, \quad \text{the null function, i.e., the function mapping every pair into $\Omega$;}\\
		&f_1, \quad \text{defined by } f_1(P_3,P_5)=f_1(P_3,P_6)=f_1(P_5,P_6)=(0,1), \quad \text{and $\Omega$ elsewhere;}\\
		&f_2, \quad \text{defined by } f_2(P_3,P_4)=f_2(P_3,P_7)=f_2(P_4,P_7)=(0,1), \ f_2(P_3,P_5)=f_2(P_3,P_6)=f_2(P_5,P_6)=(1,0)\\ &\quad \quad \ \text{and $\Omega$ elsewhere.}
	\end{align*}
	The Steiner loop defined by the null factor system $f_0$ is a group, hence the corresponding $\mathrm{STS}(31)$ is $\mathrm{PG}(4,2)$, which has $31$ Veblen points.
	
	\Cref{fig:factorsystemsf1f2} provides a visual representation of the non-trivial factor systems $f_1$ and $f_2$, where the triples of $\mathcal{Q}$ in which $f_i$ is equal to $(0,1)$ are drawn in red, and the ones in which $f_i$ is equal to $(1,0)$ are drawn in blue, $i=1,2$.
	
	\begin{figure}[H]
		\begin{center}
			\begin{tikzpicture}[scale=0.35]
				
				\draw[line width=0.5pt,color=gray]  (-6,0) -- (6,0) -- (0,10) -- cycle;
				\draw[line width=0.5pt,color=gray]  (-6,0) -- (3,5);
				\draw[line width=0.5pt,color=gray]  (6,0) -- (-3,5);
				\draw[line width=0.5pt,color=gray]  (0,10) -- (0,0);
				
				\draw[line width=1.5pt,color=red] (0,3.38) circle [radius=3.38];
				
				\fill (-6,0) circle (10pt);
				\fill (6,0) circle (10pt);
				\fill (0,10) circle (10pt);
				\fill (0,0) circle (10pt);
				\fill (3,5) circle (10pt);
				\fill (0,3.38) circle (10pt);
				\fill (-3,5) circle (10pt); 
				
				\node at (0,11) {$P_4$};
				\node at (0,-1) {$P_3$};
				\node at (2,3.38) {$P_7$};
				\node at (-7.5,0) {$P_1$};
				\node at (7.5,0) {$P_2$};
				\node at (-4,5) {$P_5$};
				\node at (4,5) {$P_6$};
			\end{tikzpicture}
			\hspace{1cm}
			\begin{tikzpicture}[scale=0.35]
				
				\draw[line width=0.5pt,color=gray]  (-6,0) -- (6,0) -- (0,10) -- cycle;
				\draw[line width=0.5pt,color=gray]  (-6,0) -- (3,5);
				\draw[line width=0.5pt,color=gray]  (6,0) -- (-3,5);
				\draw[line width=1.5pt,color=red]   (0,10) -- (0,0);
				
				\draw[line width=1.5pt,color=blue] (0,3.38) circle [radius=3.38];
				
				\fill (-6,0) circle (10pt);
				\fill (6,0) circle (10pt);
				\fill (0,10) circle (10pt);
				\fill (0,0) circle (10pt);
				\fill (3,5) circle (10pt);
				\fill (0,3.38) circle (10pt);
				\fill (-3,5) circle (10pt); 
				
				\node at (0,11) {$P_4$};
				\node at (0,-1) {$P_3$};
				\node at (2,3.38) {$P_7$};
				\node at (-7.5,0) {$P_1$};
				\node at (7.5,0) {$P_2$};
				\node at (-4,5) {$P_5$};
				\node at (4,5) {$P_6$};
			\end{tikzpicture}\caption{The factor systems $f_1$ (left) and $f_2$ (right).}\label{fig:factorsystemsf1f2}
		\end{center}
	\end{figure}
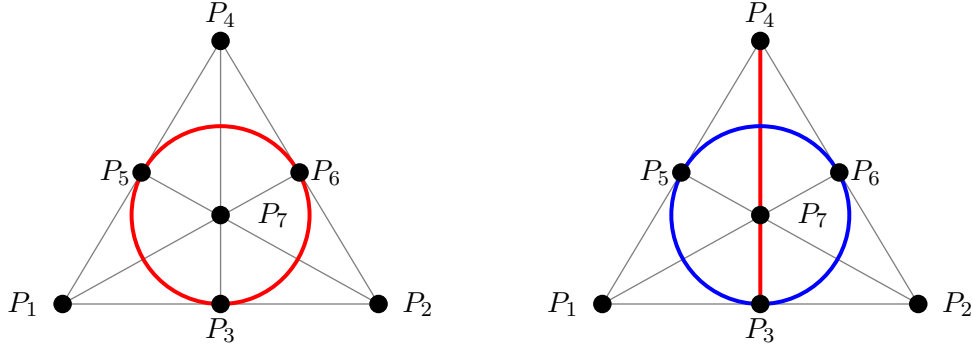
	
	For $i=1,2$, it is easy to check by hand that for any $P\in\mathcal{Q}$, no element of the form $(P,x)$ can be a Veblen point of $\mathcal{S}_i$ since the following condition is never satisfied:
	\begin{equation*}
		f_i(P,Q)+f_i(PQ,R)=f_i(Q,R)+f_i(P,QR) \quad \text{for every $Q,R\in\mathcal{Q}$}.
	\end{equation*}
	Hence, $\mathcal{L_N}$ is in both cases the whole center of $\mathcal{L}_{\mathcal{S}_i}$, $i=1,2$.  In conclusion, there are only $2$ non-isomorphic $\mathrm{STS}(31)$s with precisely three Veblen points. 
\end{proof}

	The two non-trivial factor systems defined in the proof of \Cref{th313} give a compact representation of all the $155$ triples of each of the two $\mathrm{STS}(31)$s with exactly $3$ Veblen points.

	\nocite{*}
	\printbibliography[title={References}]

@article{FFG,
	author={Falcone, G. and Figula, A. and Galici, M.},
	journal = {Journal of Combinatorial Designs},
	title={Extensions of Steiner triple systems}, 
	DOI= {10.1002/jcd.21964}

}

@article{NagyStrambach_Schreier,
	author = {Nagy, P. T. and Strambach, K.},
	journal = {Czechoslovak Mathematical Journal},
	keywords = {extension of loops; non-associative extension of groups; weak associativity properties of extensions; central extensions; extensions of loops; non-associative extensions of groups; weak associativity properties of extensions; central extensions},
	language = {eng},
	number = {3},
	pages = {759-786},
	publisher = {Institute of Mathematics, Academy of Sciences of the Czech Republic},
	title = {Schreier loops},
	url = {http://eudml.org/doc/37867},
	volume = {58},
	year = {2008},
}

@article{Bruck_contributions,
	ISSN = {00029947},
	URL = {http://www.jstor.org/stable/1990147},
	author = {R. H. Bruck},
	journal = {Transactions of the American Mathematical Society},
	number = {2},
	pages = {245--354},
	publisher = {American Mathematical Society},
	title = {Contributions to the Theory of Loops},
	volume = {60},
	year = {1946}
}

@Book{Handbook,
	Editor = {Colbourn, C. J. and Dinitz, J. H.},
	Title = {The {CRC} handbook of combinatorial designs},
	Edition = {2nd ed.},
	FSeries = {Discrete Mathematics and its Applications},
	Series = {Discrete Math. Appl. (Boca Raton)},
	ISBN = {1-58488-506-8},
	Year = {2007},
	Publisher = {Boca Raton, FL: Chapman \& Hall\/CRC},
	Language = {English},
	Keywords = {05-02,00B15,05Bxx},
	zbMATH = {5076908},
	Zbl = {1101.05001}
}

@Article{Pavone13,
	Author = {Pavone, M.},
	Title = {A visual representation of the {Steiner} triple systems of order 13},
	FJournal = {The Art of Discrete and Applied Mathematics},
	Journal = {Art Discrete Appl. Math.},
	ISSN = {2590-9770},
	Volume = {6},
	Number = {3},
	Pages = {22},
	Note = {Id/No p3.04},
	Year = {2023},
	Language = {English},
	DOI = {10.26493/2590-9770.1564.2b8},
	Keywords = {05B07,05E18,51E10},
	zbMATH = {7678119},
	Zbl = {1509.05036}
}

@Article{STS19,
	Author = {Kaski, P. and {\"O}sterg{\aa}rd, P. R. J.},
	Title = {The {Steiner} triple systems of order 19},
	FJournal = {Mathematics of Computation},
	Journal = {Math. Comput.},
	ISSN = {0025-5718},
	Volume = {73},
	Number = {248},
	Pages = {2075--2092},
	Year = {2004},
	Language = {English},
	DOI = {10.1090/S0025-5718-04-01626-6},
	Keywords = {05B07,05E30,51E10,68R10},
	zbMATH = {2091020},
	Zbl = {1043.05020}
}

@Article{enumeratiostswithsub,
	Author = {Kaski, P. and {\"O}sterg{\aa}rd, P. R. J. and Popa, A.},
	Title = {Enumeration of {Steiner} triple systems with subsystems},
	FJournal = {Mathematics of Computation},
	Journal = {Math. Comput.},
	ISSN = {0025-5718},
	Volume = {84},
	Number = {296},
	Pages = {3051--3067},
	Year = {2015},
	Language = {English},
	DOI = {10.1090/mcom/2945},
	Keywords = {05B07,05A15,51E10},
}

@misc{sts21withsub,
	title={Steiner Triple Systems of Order 21 with Subsystems}, 
	author={Heinlein, D. and {\"O}sterg{\aa}rd, P. R. J.},
	year={2022},
	eprint={2104.06825},
	archivePrefix={arXiv},
	primaryClass={math.CO}
}

@book{ColbournRosa,
	title={Triple Systems},
	author={Colbourn, C. J. and Rosa, A.},
	isbn={9780198535768},
	lccn={98053085},
	series={Oxford mathematical monographs},
	url={https://books.google.it/books?id=brpLTq_NMRUC},
	year={1999},
	publisher={Clarendon Press}
}

@book{bruckbook,
	title={A Survey of Binary Systems},
	author={Bruck, R. H.},
	number={v. 20},
	isbn={9780387034973},
	lccn={58004666},
	series={A Survey of Binary Systems},
	url={https://books.google.it/books?id=skwfAQAAIAAJ},
	year={1958},
	publisher={Springer}
}

@book{pflugfelder,
	title={Quasigroups and Loops: Introduction},
	author={Pflugfelder, H. O.},
	isbn={9783885380078},
	lccn={91117358},
	series={Sigma series in pure mathematics},
	url={https://books.google.hu/books?id=MQbvAAAAMAAJ},
	year={1990},
	publisher={Heldermann}
}

@Article{propertiesSTS19,
	Author = {Colbourn, C. J. and Forbes, A. D. and Grannell, M. J. and Griggs, T. S. and Kaski, P. and {\"O}sterg{\aa}rd, P. R. J. and Pike, D. A. and Pottonen, O.},
	Title = {Properties of the {Steiner} triple systems of order 19},
	FJournal = {The Electronic Journal of Combinatorics},
	Journal = {Electron. J. Comb.},
	ISSN = {1077-8926},
	Volume = {17},
	Number = {1},
	Pages = {research paper r98, 30},
	Year = {2010},
	Language = {English},
	Keywords = {05B07},
	zbMATH = {5769513},
	Zbl = {1193.05039}
}

@ARTICLE{STS21HeinleinOstergard,
	author = {Heinlein, Daniel and Östergård, Patric R. J.},
	title = {Enumerating Steiner triple systems},
	year = {2023},
	journal = {Journal of Combinatorial Designs},
	volume = {31},
	number = {10},
	pages = {479 – 495},
	doi = {10.1002/jcd.21906},
	type = {Article},
	publication_stage = {Final},
	source = {Scopus},
	note = {All Open Access, Hybrid Gold Open Access}
}

	\newpage 
	
	\appendix
	\section{Appendix}\label{appendix}


In this section, we present a description of the algorithms used for our results. In particular, we developed our code in Python by using
the library Sagemath 10.2 \footnote{\url{www.sagemath.org}}. The source code is available at \url{https://github.com/giuseppefilippone/steiner-triple-systems-vb-19-27-31}. Moreover, we run it on a machine with the
following hardware:
\begin{itemize}
	\item HDD: 4.8 TB SAS ($4 \times 1.2$ TB);
	\item RAM: 128 GB DDR4 DIMM 2933 MHz ($4 \times 32$ GB);
	\item CPU: 2 Intel Xeon Gold 5218, 16-core.
\end{itemize}

\subsection{Description of the algorithms}\label{sec:descriptionalgorithms}

In order to make the results presented in this work more clear, we describe the algorithms used for classifying the Steiner triple systems of order $19$, $27$ and $31$ with Veblen points. 

In general, we begin with an associative Steiner loop $\mathcal{L_N}$, which, being an elementary abelian $2$-group, we represent by $\mathrm{GF}(2)^t$. In our specific cases, $t$ will be either $1$ or $2$. Additionally, we have a Steiner loop $\mathcal{L_Q}$ whose the operation is defined by the triples of $\mathcal{Q}$ arranged as columns of a table. Fixed an order for the $b$ triples of $\mathcal{Q}$, we define the ordered set of \emph{fundamental pairs}, denoted with $\mathcal{Q}_2$, as the family of $2$-subsets obtained by removing one element from each triple. 

Since any factor system $f\in\mathrm{Ext_S}(\mathcal{L_N},\mathcal{L_Q})$ is defined by the value it takes in every fundamental pair, in order to improve the efficiency of our implementation we can represent $f$ with an integer in the following way. Let $n_i\in\mathrm{GF}(2)^t$ be the image of the fundamental pair $c_i$ under $f$, $i=1,\dots,b$. Then $f$ can be identified uniquely with the vector $(n_1,n_2,\dots,n_b)\in(\mathrm{GF}(2)^{t})^b$. Of course $f$ can be compressed into a binary vector of length ${tb}$, which is the binary representation of an integer. 

On the other hand, every vector $v$ of $\mathrm{GF}(2)^{tb}$ defines uniquely a factor system $f$, precisely the one mapping the $i$-th fundamental pair into the $i$-th sub-vector of $v$ of length $t$, $i=1,\dots,b$. In this way, we can uniquely describe every factor system as an integer $0\leq k \leq 2^{tb}-1$.

\begin{example}
	For instance, let $\mathcal{L_N}$ be $\mathrm{GF}(2)^2$ and $\mathcal{Q}$ be the $\mathrm{STS}(9)$ with points $\{1,\dots, 9\}$ and triples given by the columns of \Cref{tab:sts9}. 
	\begin{table}[H]
		\centering
		\begin{tabular}{*{12}{c}} 
			1 & 1 & 1 & 1 & 2 & 2 & 2 & 3 & 3 & 3 & 4 & 7\\
			2 & 4 & 5 & 6 & 4 & 5 & 6 & 4 & 5 & 6 & 5 & 8\\
			3 & 7 & 9 & 8 & 9 & 8 & 7 & 5 & 7 & 9 & 6 & 9\\
		\end{tabular}
		\caption{Triples of the $ \mathrm{STS}(9) $}
		\label{tab:sts9}
	\end{table}	
	\noindent Let the fundamental pairs of $\mathcal{Q}$ be given in lexicographic order as follows:
	\begin{align*}
		\{1,2\}, \ \{1,4\}, \ \{1,5\}, \ \{1,6\}, \ \{2,4\},\ \{2,5\},\\
		\{2,6\}, \  \{3,4\}, \ \{3,5\}, \ \{3,6\}, \ \{4,5\}, \ \{7,8\}.
	\end{align*}
	Let $f$ be the factor system defined by the following conditions:
	\begin{align*}
		&f(1,2)=(0,0), \ f(1,4)=(0,0), \ f(1,5)=(0,0), \ f(1,6)=(0,0), \\ 
		&f(2,4)=(0,1),\ f(2,5)=(1,1), \ f(2,6)=(0,0), \  f(3,4)=(1,1), \\
		&f(3,5)=(1,0), \ f(3,6)=(0,1), \ f(4,5)=(0,0), \ f(7,8)=(0,0).
	\end{align*}
	We can represent $f$ as the vector
	{\small 
		\begin{equation*}
			\left( (0,0),(0,0),(0,0),(0,0),(0,1),(1,1),(0,0),(1,1),(1,0),(0,1),(0,0),(0,0)\right)
	\end{equation*}}
	of $(\mathrm{GF}(2)^2)^6$ which can be compacted into the binary vector $$(0,0,0,0,0,0,0,0,0,1,1,1,0,0,1,1,1,0,0,1,0,0,0,0)$$ 
	representing the integer $29584$.
\end{example}

In the same way, after computing the set $\mathrm{B}^2(\mathcal{L_Q},\mathcal{L_N})=\{\delta^1\varphi \mid \varphi\colon \mathcal{L_Q}\to\mathcal{L_N}, \ \varphi(\bar \Omega)=\Omega\}$, we can represent every coboundary as one of the integers $k$, $0\leq k \leq 2^{tb}-1$. 
In order to find the cosets in $\mathrm{Ext}_S(\mathcal{L_N},\mathcal{L_Q})/\mathrm{B}^2(\mathcal{L_Q},\mathcal{L_N})$, we compute the sum $f+\delta^1\varphi$ as the operation XOR between binary vectors.  Consequently, we can find the set of non-equivalent extensions by taking one representative in each coset.

Now, to find the set of non-isomorphic extensions, we compute the full automorphism groups $\mathrm{Aut}(\mathcal{L_N})$ and $\mathrm{Aut}(\mathcal{L_Q})$. In our cases, the former will be either the trivial group or $\mathrm{GL}(2,2)$. For the latter, since the quotient system $\mathcal{Q}$ in our cases have order either $7$, $9$, $13$ or $15$, we have different possibilities. 

If $\mathcal{Q}$ is the $\mathrm{STS}(7)$, the $\mathrm{STS}(9)$ or the $\mathrm{STS}(15)$ $\# 1$, then the group $\mathrm{Aut}(\mathcal{L_Q})$ is $\mathrm{GL}(3,2)$, $\mathrm{Aff}(2,3)=\mathrm{GF}(3)^2\rtimes \mathrm{GL}(2,3)$, or $\mathrm{GL}(4,2)$, respectively.

If $\mathcal{Q}$ is an $\mathrm{STS}(13)$, then there are the two possibilities.
One of the two non-isomorphic $\mathrm{STS}(13)$s is usually referred to as the \emph{non-cyclic} $\mathrm{STS}(13)$, and in this case $\mathrm{Aut}(\mathcal{L_Q})$ is the  the symmetric group $S_3$. The other one is called the \emph{cyclic} $\mathrm{STS}(13)$, because it has an automorphism of order $13$, and the full group $\mathrm{Aut}(\mathcal{L_Q})$ is the unique non-abelian group of order $39$, that is, the Frobenius group $F_{39}$. 
Let us denote, in both cases, the set of points of the $\mathrm{STS}(13)$ with $\{0,1,\dots,12\}$.  In the first case, with the triples given by \Cref{tab:sts13-1}, $\mathrm{Aut}(\mathcal{L_Q})$ is computed as the group generated by the function $x\mapsto 3x\pmod{13}$ and the permutation $(6 \; 8)(2 \; 11)(3 \; 9)(4 \; 12)(5 \; 7)$. In the second case, with the triples given by \Cref{tab:sts13-2}, $\mathrm{Aut}(\mathcal{L_Q})$ is computed as the group of functions $x\mapsto ax+b\pmod{13}$, where $a\in\{1,3,9\}$ and $b\in\{0,\dots,12\}$ (see, e.g., \cite{Pavone13}).

If $\mathcal{Q}$ is a non-projective $\mathrm{STS}(15)$, we compute the group $\mathrm{Aut}(\mathcal{L_Q})$ as the group of permutations of the $15$ points which induce a permutation of the $35$ triples as well. 

Let us now consider a factor systems $f$ represented as a vector $(n_1,n_2,\dots,n_b)\in(\mathrm{GF}(2)^t)^b$ and two automorphisms $\alpha\in\mathrm{Aut}(\mathcal{L_N})$, $\beta\in\mathrm{Aut}(\mathcal{L_N})$. The factor systems $\alpha f$ and $f\beta^{-1}$ are represented, respectively, by the vectors
\begin{equation*}
	\left(\alpha(n_1),\alpha(n_2),\dots,\alpha(n_b)\right) \quad \text{and} \quad \left(n_{\sigma(1)},n_{\sigma(2)},\dots,n_{\sigma(b)}\right).
\end{equation*}
where $\sigma\in S_b$ is the permutation induced by the automorphism $\beta$ on the set of indexes of the the fundamental pairs. 
After computing the action of both $\mathcal{L_N}$ and $\mathcal{L_Q}$ on the set of non-equivalent extensions, we take one representative for each orbit.


The main bottleneck of our implementation is given by the computation of all the factor systems in $\mathrm{Ext_S}(\mathcal{L_N},\mathcal{L_Q})$. More specifically, in the case of Steiner triple system of order $31$, 
the order of this group is $2^{35}$, and since we represent each factor system as an integer up to $35$ bits, the total dimension of $\mathrm{Ext_S}(\mathcal{L_N},\mathcal{L_Q})$
is $ \sum_{i = 0}^{b = 35} 2^i = \frac{2^{35} \cdot (2^{35} + 1)}{2} $ bits (approximately, 64 PB). Hence, for this reason,
we implemented a batch approach to compute all the non-equivalent extensions. In particular, we load blocks of factor systems
(see function \textbf{coset\_blocks} in the source code) and we check for new non-equivalent factor systems in each block, 
comparing each factor system with those already identified so far. 

This approach is also parallelizable, but it requires at least a thousand CPU cores to perform the computation. 
Additionally, computation times could be much longer than our version due to internal communication between 
the various cores (in the case of parallelization with dedicated memory), 
or due to non-blocking access to shared memory, if any. 

However, it will be interesting to check if parallelization could reduce the actual overall computation time, which, for instance, was of approximately three days for the case of the $\mathrm{STS}(31)$s with $\mathcal{Q} = \mathrm{PG}(3,2)$.

Here, we write the tables representing the Steiner triple systems mentioned in the previous sections and used in the algorithms.

\begin{table}[H]
	\centering
	\begin{tabular}{*{12}{c}} 
		1 & 1 & 1 & 1 & 2 & 2 & 2 & 3 & 3 & 4 & 4 & 6\\
		2 & 3 & 4 & 5 & 3 & 5 & 6 & 5 & 7 & 5 & 8 & 7\\
		8 & 6 & 7 & 9 & 4 & 7 & 9 & 8 & 9 & 6 & 9 & 8\\
	\end{tabular}
	\caption{$ \mathrm{STS}(9) $}
	\label{tab:sts9A}
\end{table}

\begin{table}[H]
	\centering
	\resizebox{0.9\textwidth}{!}{
	\begin{tabular}{*{26}{c}} 
		1 &  2 &  3 &  4 & 5 & 6 & 7 &  8 &  9 &  6 & 11 & 12 & 0 &  2 & 3 & 4 & 5 &  6 &  7 &  8 &  9 & 10 & 11 & 12 & 0 & 1\\
		3 &  4 &  5 & 10 & 7 & 8 & 9 & 10 & 11 & 12 &  0 &  1 & 2 &  5 & 6 & 7 & 8 &  9 & 10 & 11 & 12 &  0 &  1 &  2 & 3 & 4\\
		9 &  6 & 11 & 12 & 0 & 1 & 2 &  3 &  4 &  5 &  6 &  7 & 8 & 10 & 7 & 8 & 9 & 10 & 11 & 12 &  0 &  1 &  2 &  3 & 4 & 5\\
	\end{tabular}}
	\caption{$ \mathrm{STS}(13) $ $\# 1$ (non-cyclic)}
	\label{tab:sts13-1}
\end{table}

\begin{table}[H]
	\centering
	\resizebox{0.9\textwidth}{!}{
	\begin{tabular}{*{26}{c}} 
		1 &  2 &  3 &  4 & 5 & 6 & 7 &  8 &  9 & 10 & 11 & 12 & 0 & 2 & 3 & 4 & 5 &  6 &  7 &  8 &  9 & 10 & 11 & 12 & 0 & 1 \\
		3 &  4 &  5 &  6 & 7 & 8 & 9 & 10 & 11 & 12 &  0 &  1 & 2 & 5 & 6 & 7 & 8 &  9 & 10 & 11 & 12 &  0 &  1 &  2 & 3 & 4 \\
		9 & 10 & 11 & 12 & 0 & 1 & 2 &  3 &  4 &  5 &  6 &  7 & 8 & 6 & 7 & 8 & 9 & 10 & 11 & 12 &  0 &  1 &  2 &  3 & 4 & 5 \\
	\end{tabular}}
	\caption{$ \mathrm{STS}(13) $ $\# 2$ (cyclic)}
	\label{tab:sts13-2}
\end{table}

\begin{table}[H]
	\centering
	\resizebox{0.9\textwidth}{!}{
	\begin{tabular}{*{35}{c}} 
		0 & 0 & 0 & 0 & 0 & 0 & 0 & 1 & 1 & 1 & 1 & 1 & 1 & 2 & 2 & 2 & 2 & 2 & 2 & 3 & 3 & 3 & 3 & 4 & 4 & 4 & 4 & 5 & 5 & 5 & 5 & 6 & 6 & 6 & 6 \\
		1 & 3 & 5 & 7 & 9 & b & d & 3 & 4 & 7 & 8 & b & c & 3 & 4 & 7 & 8 & b & c & 7 & 8 & 9 & a & 7 & 8 & 9 & a & 7 & 8 & 9 & a & 7 & 8 & 9 & a \\
		2 & 4 & 6 & 8 & a & c & e & 5 & 6 & 9 & a & d & e & 6 & 5 & a & 9 & e & d & b & c & d & e & c & b & e & d & e & d & c & b & d & e & b & c \\
	\end{tabular}}
	\caption{$ \mathrm{STS}(15) $ $\# 2$}
	\label{tab:sts15-2}
\end{table}

\begin{table}[H]
	\centering
	\resizebox{0.9\textwidth}{!}{
	\begin{tabular}{*{35}{c}} 
		0 & 0 & 0 & 0 & 0 & 0 & 0 & 1 & 1 & 1 & 1 & 1 & 1 & 2 & 2 & 2 & 2 & 2 & 2 & 3 & 3 & 3 & 3 & 4 & 4 & 4 & 4 & 5 & 5 & 5 & 5 & 6 & 6 & 6 & 6\\
		1 & 3 & 5 & 7 & 9 & b & d & 3 & 4 & 7 & 8 & b & c & 3 & 4 & 7 & 8 & b & c & 7 & 8 & 9 & a & 7 & 8 & 9 & a & 7 & 8 & 9 & a & 7 & 8 & 9 & a\\
		2 & 4 & 6 & 8 & a & c & e & 5 & 6 & 9 & a & d & e & 6 & 5 & a & 9 & e & d & b & c & d & e & d & e & b & c & e & d & c & b & c & b & e & d\\
	\end{tabular}}
	\caption{$ \mathrm{STS}(15) $ $\# 3$}
	\label{tab:sts15-3}
\end{table}

\begin{table}[H]
	\centering
	\resizebox{0.9\textwidth}{!}{
	\begin{tabular}{*{35}{c}} 
		0 & 0 & 0 & 0 & 0 & 0 & 0 & 1 & 1 & 1 & 1 & 1 & 1 & 2 & 2 & 2 & 2 & 2 & 2 & 3 & 3 & 3 & 3 & 4 & 4 & 4 & 4 & 5 & 5 & 5 & 5 & 6 & 6 & 6 & 6\\
		1 & 3 & 5 & 7 & 9 & b & d & 3 & 4 & 7 & 8 & b & c & 3 & 4 & 7 & 8 & b & c & 7 & 8 & 9 & a & 7 & 8 & 9 & a & 7 & 8 & 9 & a & 7 & 8 & 9 & a\\
		2 & 4 & 6 & 8 & a & c & e & 5 & 6 & 9 & a & d & e & 6 & 5 & a & 9 & e & d & b & d & e & c & e & c & b & d & c & e & d & b & d & b & c & e\\
	\end{tabular}}
	\caption{$ \mathrm{STS}(15) $ $\# 7$}
	\label{tab:sts15-7}
\end{table}

\begin{table}[H]
	\centering
	\resizebox{0.9\textwidth}{!}{
	\begin{tabular}{*{35}{c}} 
		0 & 0 & 0 & 0 & 0 & 0 & 0 & 1 & 1 & 1 & 1 & 1 & 1 & 2 & 2 & 2 & 2 & 2 & 2 & 3 & 3 & 3 & 3 & 4 & 4 & 4 & 4 & 5 & 5 & 5 & 5 & 6 & 6 & 6 & 6\\
		1 & 3 & 5 & 7 & 9 & b & d & 3 & 4 & 7 & 8 & a & c & 3 & 4 & 7 & 8 & 9 & b & 7 & 8 & 9 & c & 7 & 8 & 9 & a & 7 & 8 & a & b & 7 & 8 & 9 & a\\
		2 & 4 & 6 & 8 & a & c & e & 5 & 6 & 9 & b & d & e & 6 & 5 & a & e & c & d & b & a & e & d & e & c & d & b & d & 9 & c & e & c & d & b & e\\
	\end{tabular}}
	\caption{$ \mathrm{STS}(15) $ $\# 61$}
	\label{tab:sts15-61}
\end{table}

\begin{table}[H]
	\centering
	\resizebox{0.9\textwidth}{!}{
	\begin{tabular}{*{35}{c}} 
		0 & 0 & 0 & 0 & 0 & 0 & 0 & 1 & 1 & 1 & 1 & 1 & 1 & 2 & 2 & 2 & 2 & 2 & 2 & 3 & 3 & 3 & 3 & 4 & 4 & 4 & 4 & 5 & 5 & 5 & 6 & 6 & 6 & 7 & 8\\
		1 & 3 & 5 & 7 & 9 & b & d & 3 & 4 & 6 & 9 & a & c & 3 & 4 & 5 & 7 & 8 & b & 6 & 7 & 8 & a & 5 & 8 & a & b & 7 & 8 & 9 & 7 & 9 & c & 9 & a\\
		2 & 4 & 6 & 8 & a & c & e & 5 & 7 & 8 & b & d & e & 9 & 6 & a & e & c & d & b & c & d & e & d & 9 & c & e & b & e & c & a & e & d & d & b\\
	\end{tabular}}
	\caption{$ \mathrm{STS}(15) $ $\# 80$}
	\label{tab:sts15-80}
\end{table}

\end{document}